\newtheorem{thm}{Theorem}[section]
\newtheorem{lem}[thm]{Lemma}
\newtheorem{prop}[thm]{Proposition}
\newtheorem{rem}[thm]{Remark}
\DeclareMathOperator{\Ker}{Ker}
\title{\textbf{\Large Splitting theorem for sheaves of holomorphic $k$-vectors on complex contact manifolds}}
\author{\textsc{Takayuki MORIYAMA
and
Takashi NITTA
}}
\date{\today}
\begin{document}
\maketitle
\thispagestyle{empty}

\footnote{\textit{2010 Mathematics Subject Classification.} Primary 53D10; Secondary 32L10, 53D35.}
\footnote{\textit{Key Words and Phrases.} 
Complex contact structure, Splitting of cohomology. }

\begin{quote}
\textbf{\fontsize{10pt}{15pt}\selectfont Abstract.}
\textrm{\fontsize{10pt}{15pt}\selectfont 
A complex contact structure $\gamma$ is defined by a system of holomorphic local $1$-forms satisfying the completely non-integrability condition. 
The contact structure induces a subbundle $\Ker \gamma$ of the tangent bundle and a line bundle $L$. 
In this paper, we prove that the sheaf of holomorphic $k$-vectors on a complex contact manifold 
splits into the sum of $\mathcal{O}(\bigwedge^{k} \Ker \gamma)$ and $\mathcal{O}(L\otimes \bigwedge^{k-1} \Ker \gamma)$ as sheaves of {\it $\mathbb{C}$-module}. 
The theorem induces the short exact sequence of cohomology of holomorphic $k$-vectors, 
and we obtain vanishing theorems for the cohomology of $\mathcal{O}(\bigwedge^{k} \Ker \gamma)$. 
}
\end{quote}

\section{Introduction}
Originating in physics, contact geometry is a mathematical formulation of classical mechanics. 
Contact geometry describes a geometric structure which appears in any constant energy hypersurface in the even-dimensional phase space of a mechanical system. 
In mathematics, the concept of contact structure appears explicitly in the work of Sophus Lie, and implicitly perhaps much earlier. 
By using a sheaf coefficient cohomology theory, Gray developed the idea and introduced a concept of {\it almost contact structure}~\cite{Gr}. 
He considered the deformation of a global contact structure in the terminology of homological algebra. 
Boothby and Wang studied the homogeneous manifolds associated with the contact transformation group~\cite{BW}. 
Furthermore, Kobayashi introduced the complex contact structure and developed several results of complex contact geometry~\cite{K1}. 
The complex contact structure is associated with a quaternionic structure with respect to the twistor correspondence \cite{I} \cite{Sa} \cite{Le}. 
From the beginning of the study of contact structures, it has been known that contact structures have a deep relationship with sheaf cohomology, for example, Gray'work. 

Let $M$ be a complex manifold of dimension $2n+1$. 
We consider a system $\{(U_i,\gamma_i) \}$ of an open covering $\{U_i\}$ of $M$ and holomorphic 1-forms $\gamma_i$ on $U_i$ 
such that $\gamma_i$ is a contact 1-form, that is, $(d\gamma_i)^n \wedge \gamma_i\neq 0$ on $U_i$, and $\gamma_i=f_{ij}\gamma_j$ for a holomorphic function $f_{ij}$ on $U_i\cap U_j$. 
We say that such systems $\{(U_i,\gamma_i )\}$ and $\{(U'_{i'},\gamma'_{i'} )\}$ are equivalent 
if there exists a holomorphic function $g_{ii'}$ on each intersection $U_i\cap U_{i'}$ so that $\gamma_{i}=g_{ii'}\gamma'_{i'}$ on $U_i\cap U_{i'}$, 
and call an equivalent class of $\{(U_i,\gamma_i) \}$ a {\it complex contact structure} on $M$. 
We denote by $\gamma=\{(U_i,\gamma_i) \}$ the contact structure on $M$. 
A pair $(M, \gamma)$ is called a \textit{complex contact manifold}. 
The contact structure $\gamma$ induces a line bundle $L$ on $M$ by the transition function $\{f_{ij}\}$. 
The canonical bundle $K_M$ is equal to $L^{-n-1}$ 
since $L^{n+1}\otimes \bigwedge^{2n+1}\mathbb{T}^* \cong \underline{\mathbb{C}}$ by the global section $(d\gamma_i)^{n+1}\wedge \gamma_i$.
The contact structure $\gamma$ is an $L$-valued $1$-form on $M$. 
We regard $\gamma$ as a bundle map from the holomorphic tangent bundle $\mathbb{T}$ of $M$ to $L$ 
and denote by $\Ker \gamma$ the kernel of the map $\gamma$ : 
\[
\Ker \gamma=\{v\in \mathbb{T}\mid \gamma(v)=0\}.
\]
There exists a short exact sequence of sheaves : 
\[
0\to \mathcal{O}(\Ker \gamma) \to \mathcal{O}(\mathbb{T}) \stackrel{\gamma}{\to} \mathcal{O}(L) \to 0. 
\]
Let $X$ be a holomorphic vector field on $M$. 
The Lie derivative $L_{X}\gamma_i$ of $\gamma_i$ with respect to $X$ is a $1$-form on $U_i$. 
The set $\{L_{X}\gamma_i \}$ is not a global form on $M$. 
However, the restriction $L_{X}\gamma_i|_{\Ker \gamma}$ to $\Ker \gamma$ defines a global section $\{L_{X}\gamma_i|_{\Ker \gamma} \}$ 
of the tensor $L\otimes\Ker \gamma^*$ of $L$ and the dual of $\Ker \gamma$. 
We call a vector field $X$ a {\it contact vector field} of $\gamma$ 
if $L_{X}\gamma_i|_{\Ker \gamma}=0$ for each $i$. 
The system of equations $\{L_{X}\gamma_i|_{\Ker \gamma}=0\}$ is a global and holomorphic equation for $X$. 
Such a vector field generates a contact automorphism. 
We define $\mathfrak{aut}(M, \gamma)$ to be the set of contact vector fields of $\gamma$. 
As an analogy of the real contact structures~\cite{K2}, Nitta and Takeuchi showed that 
for any element $s$ of $\mathcal{O}(L)$, 
there exists a unique contact vector field $X$ of $\gamma$ such that $\gamma(X)=s$. 
Moreover, the correspondence $\mathcal{O}(L) \to \mathfrak{aut}(M, \gamma)$ is isomorphic~\cite{NT}. 
It means that the holomorphic tangent sheaf $\mathcal{O}(\mathbb{T})$ splits into 
$\mathcal{O}(\Ker \gamma)$ and $\mathcal{O}(L)$ as sheaves of $\mathbb{C}$-module. 
LeBrun showed that $\mathcal{O}(\mathbb{T})$ does not split into the sum of $\mathcal{O}(\Ker \gamma)$ and $\mathcal{O}(L)$ 
as sheaves of $\mathcal{O}$-module on Fano manifolds~\cite{Le}. 
Therefore, the splitting of $\mathcal{O}(\mathbb{T})$ does not directly induce that of the sheaf $\mathcal{O}(\bigwedge^k \mathbb{T})$ of $k$-vectors. 

In this paper, we show the splitting of $\mathcal{O}(\bigwedge^k \mathbb{T})$ into the sum of 
$\mathcal{O}(\bigwedge^{k} \Ker \gamma)$ and $\mathcal{O}(L\otimes \bigwedge^{k-1} \Ker \gamma)$ as sheaves of $\mathbb{C}$-module. 
We extend the map $\gamma$ to a bundle map from $\bigwedge^k \mathbb{T}$ to $L\otimes \bigwedge^{k-1} \Ker \gamma$ by 
\[
v_{1}\wedge\dots \wedge v_{k}\mapsto \sum_{j=1}^k (-1)^{j-1}\gamma(v_{j})\otimes v_{1}\wedge\dots\wedge v_{j-1}\wedge v_{j+1} \wedge\dots \wedge v_{k}
\]
and denote the map also by $\gamma$ for simplicity. 
Since the kernel of the map $\gamma$ is just the space $\bigwedge^k \Ker \gamma$, we obtain the short exact sequence of the sheaves : 
\[
0\to \mathcal{O}(\bigwedge^k \Ker \gamma) \to \mathcal{O}(\bigwedge^k \mathbb{T}) \stackrel{\gamma}{\to} \mathcal{O}(L\otimes \bigwedge^{k-1}\Ker \gamma) \to 0.
\]
We shall extend to the equation $L_{X}\gamma_i|_{\Ker \gamma}=0$ for a $1$-vector $X$ to a global and holomorphic equation for $k$-vectors. 
Let $\nabla$ be a connection of the line bundle $L$ such that $\nabla^{0,1}=\bar{\partial}$. 
For a $1$-vector $X$, the local equation $L_{X}\gamma_i|_{\Ker \gamma}=0$ is given by 
the global equation $(d^{\nabla}\gamma )(X)+d^{\nabla}(\gamma (X))|_{\Ker \gamma}=0$ which is holomorphic. 
The direct extension $(\otimes^k d^{\nabla}\gamma)(X)+d^{\nabla}(\otimes^{k-1} d^{\nabla}\gamma(\gamma(X)))|_{\Ker \gamma}=0$ for a $k$-vector $X$ 
is global but not holomorphic whenever $\nabla$ is holomorphic. 
In order to find a global and holomorphic equation for holomorphic k-vector fields, 
we decompose the space $\bigwedge^k \Ker \gamma$ as a sum of primitive parts with respect to the symplectic structure on $\Ker \gamma$. 
Then we obtain such a equation which induces the following splitting theorem for $\mathcal{O}(\bigwedge^k \mathbb{T})$ : 
\begin{thm}\label{s1t2}
Let $k$ be an integer from $1$ to $2n+1$. 
The sequence 
\[
0\to \mathcal{O}(\bigwedge^k \Ker \gamma) \to \mathcal{O}(\bigwedge^k \mathbb{T}) \to \mathcal{O}(L\otimes \bigwedge^{k-1} \Ker \gamma) \to 0
\] 
splits as sheaves of $\mathbb{C}$-module. In particular, the sequence 
\[
0\to H^i(\bigwedge^k \Ker \gamma) \to H^i(\bigwedge^k \mathbb{T}) \to H^i(L\otimes \bigwedge^{k-1} \Ker \gamma) \to 0
\] 
is exact for each $i=0,\dots,2n+1$. 
\end{thm}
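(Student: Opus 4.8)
The plan is to produce a morphism of sheaves of $\mathbb{C}$-modules
\[
\sigma\colon \mathcal{O}(L\otimes\bigwedge^{k-1}\Ker\gamma)\to\mathcal{O}(\bigwedge^{k}\mathbb{T})
\]
that is a right inverse of $\gamma$, i.e.\ $\gamma\circ\sigma=\mathrm{id}$. Once such a $\mathbb{C}$-linear splitting is in hand, the cohomological statement is formal: applying the functors $H^{i}(M,-)$ to the split short exact sequence of sheaves of $\mathbb{C}$-modules produces maps $H^{i}(\sigma)$ with $H^{i}(\gamma)\circ H^{i}(\sigma)=\mathrm{id}$, so each $H^{i}(\gamma)$ is surjective and the connecting homomorphisms of the long exact sequence vanish; the long exact sequence therefore breaks into the asserted short exact sequences for $i=0,\dots,2n+1$. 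Only a $\mathbb{C}$-linear splitting can be expected, since by LeBrun's theorem quoted above no $\mathcal{O}$-linear splitting exists in general.

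The model is the case $k=1$. By the theorem of Nitta and Takeuchi, the assignment sending $s\in\mathcal{O}(L)$ to the unique contact vector field $X_{s}$ with $\gamma(X_{s})=s$ splits the sequence $0\to\mathcal{O}(\Ker\gamma)\to\mathcal{O}(\mathbb{T})\to\mathcal{O}(L)\to0$, and $X_{s}$ is characterized by the global holomorphic first order equation $(d^{\nabla}\gamma)(X_{s})+d^{\nabla}(\gamma(X_{s}))|_{\Ker\gamma}=0$. The first step is to record the symplectic data on $\Ker\gamma$: the restriction $\omega=d^{\nabla}\gamma|_{\Ker\gamma}$ is a nondegenerate $L$-valued $2$-form, with local representatives $d\gamma_{i}|_{\Ker\gamma}$ transforming by the cocycle $\{f_{ij}\}$ (the connection terms drop out on $\Ker\gamma$), so that $\omega$ is the globally defined $L$-valued symplectic form on $\Ker\gamma$. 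It determines an $\mathfrak{sl}_{2}$-action on $\bigwedge^{\bullet}\Ker\gamma$, with raising operator $E_{\omega}$ (wedging with the Poisson bivector $\omega^{-1}$) and lowering operator $\Lambda_{\omega}$ (contraction with $\omega$), and hence the primitive decomposition $\bigwedge^{k}\Ker\gamma=\bigoplus_{r\ge0}E_{\omega}^{r}P^{k-2r}$, where $P^{k-2r}=\Ker\Lambda_{\omega}\cap\bigwedge^{k-2r}\Ker\gamma$ is the primitive part (with the evident $L$-twists).

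Next I would construct $\sigma$ locally and then show it is intrinsic. Fix a local holomorphic Reeb field $R_{i}$ determined by $\gamma_{i}(R_{i})=1$ and $\iota_{R_{i}}d\gamma_{i}=0$; this gives the decomposition $\bigwedge^{k}\mathbb{T}=\bigwedge^{k}\Ker\gamma\oplus R_{i}\wedge\bigwedge^{k-1}\Ker\gamma$, under which $\gamma$ identifies $R_{i}\wedge\bigwedge^{k-1}\Ker\gamma$ with $L\otimes\bigwedge^{k-1}\Ker\gamma$. Given $\alpha\in\mathcal{O}(L\otimes\bigwedge^{k-1}\Ker\gamma)$ with leading lift $R_{i}\wedge\widetilde\alpha$, set $\sigma(\alpha)=R_{i}\wedge\widetilde\alpha+\Theta(\alpha)$, where the correction $\Theta(\alpha)\in\bigwedge^{k}\Ker\gamma$ is prescribed stratum by stratum in the primitive decomposition: on each primitive component the defining condition is the $k$-vector analogue of the contact equation above, and the nondegeneracy of $\omega$ lets one solve for $\Theta(\alpha)$ by applying $\Lambda_{\omega}$ to the derivative terms $d^{\nabla}\alpha$. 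This correction is linear in $\alpha$ and its first derivatives — hence $\mathbb{C}$-linear but not $\mathcal{O}$-linear — and it is holomorphic because $\nabla^{0,1}=\bar\partial$ renders $d^{\nabla}\alpha$ holomorphic while $\omega$ and $\Lambda_{\omega}$ are holomorphic. Since $\gamma$ annihilates $\bigwedge^{k}\Ker\gamma$, a direct computation on the leading term gives $\gamma(\sigma(\alpha))=\alpha$.

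The heart of the argument, and the step I expect to be the main obstacle, is to verify that $\sigma(\alpha)$ is independent of the local choices $(\gamma_{i},R_{i})$ and hence patches to a global holomorphic $k$-vector — equivalently, that the $k$-vector contact equation defining $\Theta$ is itself global and holomorphic. This is exactly where the primitive decomposition is indispensable. As noted in the introduction, the naive extension $(\otimes^{k}d^{\nabla}\gamma)(X)+d^{\nabla}(\otimes^{k-1}d^{\nabla}\gamma(\gamma(X)))|_{\Ker\gamma}=0$ is global but fails to be holomorphic, and the failure is concentrated in the non-primitive components (those in the image of $E_{\omega}$), where curvature and lower order terms obstruct holomorphicity. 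Splitting $\bigwedge^{k}\Ker\gamma$ along the Lefschetz tower isolates these components, so that after the $\Lambda_{\omega}$-correction each primitive stratum satisfies a genuinely holomorphic and intrinsically defined equation; summing over the strata then yields a global holomorphic splitting. I would finish by checking $\mathbb{C}$-linearity, which is immediate from the linearity of contraction, $d^{\nabla}$ and $\Lambda_{\omega}$, and then read off the cohomology statement as in the first paragraph.
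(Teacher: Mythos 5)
Your overall architecture coincides with the paper's own proof: produce a $\mathbb{C}$-linear right inverse of $\gamma$ by solving, locally, a global holomorphic equation generalizing $L_X\gamma_i|_{\Ker\gamma}=0$ for $k$-vectors, prove uniqueness of the local solutions so that they patch, and then read off the cohomology statement formally. The gap is that the equation itself --- the only genuinely hard object in the proof --- is never written down. You posit that ``on each primitive component the defining condition is the $k$-vector analogue of the contact equation,'' but as the introduction (and your own proposal) concedes, the naive analogue $(\otimes^k d^{\nabla}\gamma)(X)+d^{\nabla}(\otimes^{k-1}d^{\nabla}\gamma(\gamma(X)))|_{\Ker\gamma}=0$ fails to be holomorphic; the content of the theorem is precisely that a corrected analogue exists. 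In the paper this is carried by the maps $F$ and $G$ of Sections 3.2--3.3, with matched coefficients $\frac{(k-i)!}{k!\,i!}$ and $\frac{(k-1-i)!}{k!\,i!}$, and by two nontrivial verifications: (a) all connection-form terms in $\{F(X)+G(\gamma(X))\}|_{D}$ cancel (Proposition~\ref{s3p2}, via the computations (\ref{s3eq6.5}) and (\ref{s3eq8})), so that the equation $\{F(X)+G(\gamma(X))\}|_{D}=0$ is holomorphic and independent of the local data and of $\nabla$; and (b) $F|_{D}:\mathcal{O}(\bigwedge^k D)\to\mathcal{O}(L^k\otimes\bigwedge^k D^*)$, where $D=\Ker\gamma$, is an isomorphism (Proposition~\ref{s3p1}), which makes the equation solvable for the correction term and, via injectivity, gives the uniqueness needed for patching. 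Neither (a) nor (b) is established, or even precisely formulated, in your proposal.

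Two of your supporting claims are also off the mark. First, solvability is not a consequence of ``the nondegeneracy of $\omega$'': solving for your correction $\Theta(\alpha)$ amounts to inverting an operator of the form $f=\sum_i c_i\mathcal{L}^i\Lambda^i$, and by Proposition~\ref{s2p2} this is invertible if and only if the sums $\sum_{s=0}^{r}c_s\frac{r!}{(r-s)!}\frac{(n-k+r+s)!}{(n-k+r)!}$ are all nonzero --- a condition on the coefficients that genuinely fails for some choices (this is exactly why the generalization in Theorem~\ref{s3t2} carries restrictions on $m$). Second, the failure of holomorphicity of the naive equation is not ``concentrated in the non-primitive components'': the obstructing terms are those carrying the connection form $A$, attached to $A(X)$ and to $\gamma_0(X)$ in (\ref{s3eq3}); they occur at every level $i$ of the Lefschetz tower and are removed not by a stratum-by-stratum correction but by cancellation between $F$ and $G$, which is where the specific ratio of the two coefficient families is used. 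The primitive decomposition serves a different purpose, namely proving (b). So while your plan points in the right direction, the step you yourself flag as ``the main obstacle'' is the theorem, and it remains unproved.
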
 
We generalize the theorem to the splitting of $\mathcal{O}(L^{m}\otimes \bigwedge^k \mathbb{T})$ 
and the exact sequence of $H^i(L^{m}\otimes \bigwedge^k \mathbb{T})$ under a condition for $m$ and $k$ (see Theorem~\ref{s3t2}). 
As an application, we obtain the following vanishing theorem for $H^i(L^{m}\otimes \bigwedge^k \Ker \gamma)$ on compact K\"ahler manifolds 
by Kodaira-Akizuki-Nakano vanishing theorem : 
\begin{thm}\label{s1t4} 
If $M$ is a compact K\"ahler complex contact manifold with $c_1(M)>0$, 
then $H^i(M, L^m \bigwedge^{k}  \Ker \gamma)=\{0\} $ for $k$ and $m$ satisfying one of following three conditions 
\begin{equation*}
\quad 
\left\{
\begin{array}{lll}
i\le 2n-k,& m \le -[\frac{k+1}{2}]-n-1,& 1\le k\le 2n+1, \\
\forall i,& -n\le m \le -k-1,& 1\le k\le n-1, \\
i\ge k+1,& m \ge -[\frac{k}{2}],&  1\le k\le 2n+1. 
\end{array}
\right.
\end{equation*}
\end{thm}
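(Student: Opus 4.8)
The plan is to run the cohomology exact sequence of the contact short exact sequence through the Kodaira--Akizuki--Nakano (KAN) vanishing theorem. First I would twist $0\to\mathcal O(\bigwedge^k\Ker\gamma)\to\mathcal O(\bigwedge^k\mathbb T)\to\mathcal O(L\otimes\bigwedge^{k-1}\Ker\gamma)\to0$ by $L^m$ and take cohomology, so that $H^i(L^m\otimes\bigwedge^k\Ker\gamma)$ is squeezed between its neighbours $H^i(L^m\otimes\bigwedge^k\mathbb T)$ and $H^{i-1}(L^{m+1}\otimes\bigwedge^{k-1}\Ker\gamma)$ (short exact where Theorem~\ref{s3t2} applies, and in any case controlled by the long exact sequence). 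The middle term is the one I can compute: from $K_M=L^{-n-1}$ one gets $\bigwedge^k\mathbb T\cong\Omega^{2n+1-k}\otimes L^{n+1}$, hence $L^m\otimes\bigwedge^k\mathbb T\cong\Omega^{2n+1-k}\otimes L^{m+n+1}$. Since $c_1(M)>0$ and $K_M^{-1}=L^{n+1}$, the bundle $L$ is positive, so KAN gives $H^i(\Omega^{2n+1-k}\otimes L^{t})=0$ whenever $t>0$ and $(2n+1-k)+i>2n+1$, and dually whenever $t<0$ and $(2n+1-k)+i<2n+1$.

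For the third line of conditions ($i\ge k+1$) I would induct on $k$ along this sequence. The positive-twist form of KAN annihilates the middle term $H^i(\Omega^{2n+1-k}\otimes L^{m+n+1})$ as soon as $m\ge-n$ and $i\ge k+1$, while the inductive hypothesis annihilates $H^{i-1}(L^{m+1}\otimes\bigwedge^{k-1}\Ker\gamma)$; the exact sequence then forces $H^i(L^m\otimes\bigwedge^k\Ker\gamma)=0$. The base $k=0$ is Kodaira vanishing: $H^i(L^m)=H^i(K_M\otimes L^{m+n+1})=0$ for $i\ge1$ when $L^{m+n+1}$ is positive. Since the inductive step sends $(m,k)$ to $(m+1,k-1)$ and keeps the twist positive, the stated threshold $m\ge-[\frac k2]$ lies inside the admissible range and the third case follows.

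The first line of conditions ($i\le2n-k$) I would obtain from the third by Serre duality. The $L$-valued symplectic form $d\gamma$ on $\Ker\gamma$ induces a nondegenerate pairing $\bigwedge^k\Ker\gamma\otimes\bigwedge^k\Ker\gamma\to L^{k}$, so $(\bigwedge^k\Ker\gamma)^{*}\cong\bigwedge^k\Ker\gamma\otimes L^{-k}$; together with $K_M=L^{-n-1}$ this yields $H^i(L^m\otimes\bigwedge^k\Ker\gamma)\cong H^{2n+1-i}(L^{-m-k-n-1}\otimes\bigwedge^k\Ker\gamma)^{*}$. Under $(m,i)\mapsto(-m-k-n-1,\,2n+1-i)$ the third-case hypotheses $i\ge k+1$, $m\ge-[\frac k2]$ become precisely $i\le2n-k$, $m\le-[\frac{k+1}2]-n-1$ (using $k-[\frac k2]=[\frac{k+1}2]$), which is the first case.

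Finally, the second line (vanishing for all $i$, with $k\le n-1$) I would deduce by combining the third case with its Serre dual. For $-n\le m\le-k-1$ the Serre partner $m'=-m-k-n-1$ again lies in $[-n,-k-1]$, so both $m$ and $m'$ are $\ge-n$; the third case then gives vanishing for $i\ge k+1$ and its dual for $i\le2n-k$, and $k\le n-1$ makes $2n-k\ge k+1$, so the two ranges overlap and exhaust all $i$. Because this range of $m$ sits outside the thresholds of the other two cases, I expect the main obstacle to be organising the induction purely through the always-exact long exact sequence here, and more generally tracking how the floor functions $[\frac k2]$, $[\frac{k+1}2]$ arise: they measure how far the twist $L^{m+n+1}$---equivalently the span $L^{m},\dots,L^{m+[k/2]}$ coming from the primitive decomposition of $\bigwedge^k\Ker\gamma$ used in Theorem~\ref{s3t2}---can be pushed while staying on the correct side of the KAN degree bound $(2n+1-k)+i\gtrless2n+1$.
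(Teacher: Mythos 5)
Your proof is correct, but it reaches the theorem by a genuinely different mechanism than the paper's. The paper's proof (Theorem~\ref{s4.1t1}) feeds the Kodaira--Akizuki--Nakano theorem into the \emph{short} exact sequences of cohomology supplied by the splitting theorem (Theorem~\ref{s3t2}): in the three ranges of $(m,k)$ where the twisted contact sequence splits as sheaves of $\mathbb{C}$-module, vanishing of the middle term $H^i(L^m\bigwedge^k\mathbb{T})$ (obtained for $i\ge k+1$, $m>-n-1$ via Serre duality and KAN) forces vanishing of \emph{both} flanking terms, and the theorem's three conditions are exactly what survives after intersecting the KAN range with the splitting ranges and closing up under Serre duality. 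You never use the splitting at all: you run a descending induction on $k$ through the \emph{long} exact sequence of the unsplit twisted sequence, killing the middle term by KAN applied directly to $\bigwedge^k\mathbb{T}\cong\Omega^{2n+1-k}\otimes L^{n+1}$ and killing the neighbour $H^{i-1}(L^{m+1}\bigwedge^{k-1}\Ker\gamma)$ by the inductive hypothesis; your Serre-duality step, via $(\Ker\gamma)^*\cong L^{-1}\otimes\Ker\gamma$, coincides with the paper's. Two remarks. First, your induction proves more than the stated third case: each step only needs $m+j\ge -n$ plus the degree condition, so you get $H^i(L^m\bigwedge^k\Ker\gamma)=\{0\}$ for all $i\ge k+1$, $m\ge -n$, with no $[\frac{k}{2}]$ threshold; and this stronger form is what your second-case argument actually relies on, since there $m\le -k-1<-[\frac{k}{2}]$, so you should cite the inductive statement (valid for $m\ge -n$) explicitly rather than ``the third case.'' Second, the trade-off between the two routes: yours is independent of the paper's main theorem and even fills the gap $-k\le m\le -[\frac{k}{2}]-1$, $i\ge k+1$ left open by the statement, but it is tied to the direction of the twist --- the inductive shift $m\mapsto m+1$ improves positivity, so it suits $c_1(M)>0$, whereas for the companion $c_1(M)<0$ statement the same induction would only give $m\le -n-2-k$, strictly weaker than the $m\le -n-[\frac{k}{2}]-2$ that the paper extracts from the splitting; that is where the splitting machinery genuinely pays off.
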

We also have a similar result for vanishing of cohomology in the case of $c_1(M)<0$ (see Theorem~\ref{s4.1t1}). 
Moreover, on $\mathbb{CP}^{2n+1}$, we show the vanishing theorem for $H^i(\mathcal{O}(m) \bigwedge^k \Ker \gamma)$ 
by Bott vanishing theorem (see Theorem~\ref{s4.2t2}).

This paper is organized as follows. 
In Section 2, we prepare some propositions for complex symplectic vector spaces. 
The space of $k$-vectors is decomposed into a sum of primitive parts with respect to the symplectic structure. 
In Section 3, the equation $L_{X}\gamma_i|_{\Ker \gamma}=0$ for a $1$-vector $X$ is extended to a global and holomorphic equation for $k$-vectors. 
By using the equation, we prove Theorem~\ref{s1t2} (see Theorem~\ref{s3t1}). 
The theorem is generalized to the splitting theorem for $\mathcal{O}(L^{m}\otimes \bigwedge^k \mathbb{T})$ (Theorem~\ref{s3t2}). 
On $\mathbb{CP}^{2n+1}$, $L$ is given by $\mathcal{O}(2)$ and we also obtain the splitting of $\mathcal{O}(m)\otimes \bigwedge^k \Ker \gamma$. 
In the last section, we show two kinds of vanishing theorems (Theorem~\ref{s4.1t1} and Theorem~\ref{s4.2t2}). 

\section{Complex symplectic vector spaces}
In this section, we prepare some propositions for complex symplectic vector spaces 
in order to show our main theorems in Section 3. 

\subsection{Symplectic structures}
Let $V$ be a complex vector space of dimension $2n$. 
A {\it complex symplectic vector space} is a pair $(V, \omega)$ of $V$ 
and a non-degenerate skew-symmetric bilinear form $\omega$ on $V$. 
If we regard the symplectic structure $\omega$ as the isomorphism from $V$ to the dual space $V^*$, 
then the $2$-tensor $\otimes^2 \omega$ of $\omega$ is the isomorphism 
\[
\otimes^2 \omega : \otimes^2 V \to \otimes^2 V^*.
\]
It induces the isomorphism $\otimes^2 \omega : \wedge^2 V \to \wedge^2 V^*$. 
A $2$-vector $w_0$ is defined by  
\[
\otimes^2 \omega(w_0)=\omega. 
\]
If we take a basis $e_1,\dots,e_{2n}$ of $V$ such that $\omega=e_1^*\wedge e_2^*+\dots+e_{2n-1}^*\wedge e_{2n}^*$, 
then $w_0$ is represented as $w_0=e_1\wedge e_2+\dots+e_{2n-1}\wedge e_{2n}$ and $\omega(w_0)=n$.

\subsection{Decomposition of the space of $k$-vectors}
Let $k$ be an integer from $1$ to $2n$. 
We define an operator $\mathcal{L}: \wedge^k V \to\wedge^{k+2}V$ by $\mathcal{L}(X)=X\wedge w_0$ for $X\in \wedge^k V$. 
We consider a $1$-form $\theta$ as a map from $\wedge^k V$ to $\wedge^{k-1} V$ by 
\[
\theta(v_{1}\wedge\dots \wedge v_{k})=\sum_{j=1}^k (-1)^{j-1}\theta(v_{j}) v_{1}\wedge\dots\wedge v_{j-1}\wedge v_{j+1} \wedge\dots \wedge v_{k}
\]
for $v_{1},\dots, v_{k} \in V$ and denote the map also by $\theta$ for simplicity. 
Let $l$ be an integer with $0\le l\le k$. 
We regard an $l$-form $\theta_1\wedge \dots \wedge \theta_l$ as 
a map $\theta_1\wedge \dots \wedge \theta_l : \wedge^k V \to \wedge^{k-l} V$ by 
\[
\theta_1\wedge \dots \wedge \theta_l(X)= \theta_l\circ \dots \circ \theta_2\circ \theta_1(X)
\]
for $X\in \wedge^k V$. 
Let $\Lambda$ be an operator $\Lambda : \wedge^k V \to \wedge^{k-2} V$ defined by $\Lambda(X)= \omega(X)$ for $X\in \wedge^k V$. 
Then we obtain the formula $(\Lambda \mathcal{L}-\mathcal{L}\Lambda)(X)=(n-k)X$ for $X\in \wedge^k V$, 
and inductively,  
\begin{equation}
(\Lambda \mathcal{L}^r-\mathcal{L}^r\Lambda)(X)=r(n-k-r+1)\mathcal{L}^{r-1}X \label{s2eq2}
\end{equation}
where we define $\mathcal{L}^0={\rm id}$. 
A $k$-vector $X$ is called {\it primitive} if $\Lambda(X)=0$. 
If follows from the formula (\ref{s2eq2}) that 
\begin{equation}
\Lambda^s \mathcal{L}^r X=
\left\{
\begin{array}{ll}
\frac{r!}{(r-s)!}(n-k-r+1)(n-k-r+2)\cdots(n-k-r+s)\mathcal{L}^{r-s}X, &\quad r\ge s, \\
0 , &\quad r<s
\end{array}
\right. \label{s2eq3}
\end{equation}
for a primitive $k$-vector $X$. 
Let $\wedge_e^k V$ denote the space of primitive $k$-vectors : 
\[
\wedge_e^k V=\{X\in \wedge^k V \mid \Lambda(X)=0 \}. 
\] 
Then we have the following decomposition of the space $\wedge^k V$ of $k$-vectors : 
\begin{prop} \label{s2p1} 
If $k\le n$, then 
\[
\wedge^k V=\wedge_e^k V+\mathcal{L}\wedge_e^{k-2} V+\dots+ \mathcal{L}^{[\frac{k}{2}]}\wedge_e^{k-2[\frac{k}{2}]} V.
\]
If $k> n$, then 
\[
\wedge^k V=\mathcal{L}^{k-n}\wedge_e^{2n-k} V+\mathcal{L}^{k-n+1}\wedge_e^{2n-k-2} V+\dots+ \mathcal{L}^{[\frac{k}{2}]}\wedge_e^{k-2[\frac{k}{2}]} V
\]
where $[m]$ means the Gauss symbol of $m$. $\hfill\Box$
\end{prop}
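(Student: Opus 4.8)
The plan is to read the identity (\ref{s2eq2}) as the statement that $\mathcal{L}$, $\Lambda$, and the grading operator acting on $\wedge^k V$ as multiplication by $n-k$ realize an $\mathfrak{sl}_2$-action, making $\wedge^{\bullet}V$ a finite-dimensional $\mathfrak{sl}_2$-module; the proposition is then its decomposition into primitive (highest weight) pieces. Rather than invoke abstract representation theory, I would establish three ingredients separately: that each map $\mathcal{L}^r\colon\wedge_e^{k-2r}V\to\wedge^k V$ is injective, that the images form a direct sum, and that together they span $\wedge^k V$. The truncation of the index $r$ --- to $0\le r\le[\frac{k}{2}]$ when $k\le n$ and to $k-n\le r\le[\frac{k}{2}]$ when $k>n$ --- will fall out of bookkeeping, since a nonzero primitive $p$-vector exists only for $p\le n$, forcing $k-2r\le n$, i.e. $r\ge k-n$.

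For injectivity and directness I would use (\ref{s2eq3}) directly. Applied to a primitive $p$-vector with $p=k-2r$ and $s=r$, the formula gives $\Lambda^r\mathcal{L}^r=c_r\,\mathrm{id}$ with $c_r=r!\,(n-p-r+1)(n-p-r+2)\cdots(n-p)$; in the admissible range each factor is a positive integer (because $p\le n$ and $r\le n-p$, the latter being exactly $r\ge k-n$ when $k>n$ and automatic when $k\le n$), so $c_r\neq 0$ and $\mathcal{L}^r$ is injective on $\wedge_e^{k-2r}V$. For directness, if $\sum_r\mathcal{L}^r x_r=0$ with $x_r\in\wedge_e^{k-2r}V$, I would apply successive powers $\Lambda^s$ and read off from (\ref{s2eq3}) a triangular system whose diagonal entries are the nonzero constants $c_r$, forcing every $x_r=0$.

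For spanning I would argue by induction on $k$, the heart of which is the single splitting $\wedge^k V=\wedge_e^k V\oplus\mathcal{L}(\wedge^{k-2}V)$ valid for $k\le n$. Given $X\in\wedge^k V$, it suffices to find $Y\in\wedge^{k-2}V$ with $\Lambda(X-\mathcal{L}Y)=0$, i.e. to solve $\Lambda\mathcal{L}\,Y=\Lambda X$. Using the inductive decomposition of $\wedge^{k-2}V$ together with (\ref{s2eq3}) (taking $s=1$), the operator $\Lambda\mathcal{L}$ acts on the block $\mathcal{L}^r\wedge_e^{k-2-2r}V$ as multiplication by $(r+1)(n-k+2+r)$, which is nonzero for $k\le n$; hence $\Lambda\mathcal{L}$ is invertible on $\wedge^{k-2}V$ and $Y$ exists. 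Feeding the inductive decomposition of $\wedge^{k-2}V$ through $\mathcal{L}$ and adjoining $\wedge_e^k V$ then yields the stated sum.

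The main obstacle is the range $k>n$, where $\wedge_e^k V=0$, so the existence argument for $Y$ is vacuous and one instead needs $\mathcal{L}\colon\wedge^{k-2}V\to\wedge^k V$ to be surjective. I would obtain this from the nondegenerate pairing $\wedge^k V\times\wedge^{2n-k}V\to\wedge^{2n}V\cong\mathbb{C}$: since $w_0$ has even degree, $(X\wedge w_0)\wedge Z=X\wedge(Z\wedge w_0)$, so $\mathcal{L}\colon\wedge^{k-2}V\to\wedge^k V$ is the transpose of $\mathcal{L}\colon\wedge^{2n-k}V\to\wedge^{2n-k+2}V$. For $k>n$ the latter sits in degree $2n-k\le n-1$, where $\mathcal{L}$ is injective by the $k\le n$ analysis (its restriction to each block $\mathcal{L}^r\wedge_e^{2n-k-2r}V$ is $\mathcal{L}^{r+1}$, injective by the positivity already used). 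Hence $\mathcal{L}$ is surjective in degree $k$, giving $\wedge^k V=\mathcal{L}(\wedge^{k-2}V)$; feeding the inductive decomposition of $\wedge^{k-2}V$ through $\mathcal{L}$ closes the induction and produces the truncated sum beginning at $r=k-n$.
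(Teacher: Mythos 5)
First, note that the paper offers no proof of Proposition~\ref{s2p1} at all: it is stated with a tombstone as the standard Lefschetz ($\mathfrak{sl}_2$) decomposition for symplectic vector spaces. So your argument is not an alternative route to the paper's --- it is the only proof on the table, and most of it is sound: the injectivity of $\mathcal{L}^r$ on $\wedge_e^{k-2r}V$ via $\Lambda^r\mathcal{L}^r=c_r\,\mathrm{id}$, the directness via the triangular system of powers of $\Lambda$, the $k\le n$ spanning step (solving $\Lambda\mathcal{L}Y=\Lambda X$ blockwise), and the duality argument making $\mathcal{L}\colon\wedge^{k-2}V\to\wedge^kV$ surjective for $k>n$ are all correct and correctly computed from (\ref{s2eq2}) and (\ref{s2eq3}).

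The genuine gap is the truncation at $r=k-n$ in the range $k>n$. Your stated justification rests on an algebra slip: $k-2r\le n$ is equivalent to $r\ge\frac{k-n}{2}$, \emph{not} to $r\ge k-n$, so the nonexistence of primitive vectors in degrees above $n$ (which you also assert without proof) does not kill the terms with $\frac{k-n}{2}\le r<k-n$. Concretely, your induction for $k\ge n+2$ feeds the decomposition of $\wedge^{k-2}V$ through $\mathcal{L}$ and produces the sum starting at $r=k-n-1$, i.e.\ it contains the extra summand $\mathcal{L}^{k-n-1}\wedge_e^{2n-k+2}V$ (for $k=n+2$ this is $\mathcal{L}\wedge_e^{n}V$), which the proposition excludes; nothing in your three ingredients shows this summand vanishes. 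What is missing is exactly the $\mathfrak{sl}_2$ string-length fact you tried to avoid: for primitive $X$ of degree $p$, $\mathcal{L}^{n-p+1}X=0$. Fortunately this has a three-line proof from your own tools: if $X\neq 0$ is primitive of degree $p$, let $N$ be maximal with $\mathcal{L}^NX\neq 0$ (finite since $\wedge^{>2n}V=0$); applying (\ref{s2eq3}) with $s=1$ to $\mathcal{L}^{N+1}X=0$ gives
\[
0=\Lambda\mathcal{L}^{N+1}X=(N+1)(n-p-N)\,\mathcal{L}^{N}X,
\]
forcing $N=n-p$. This simultaneously proves that $\mathcal{L}^rX=0$ for all $r>n-p$ (killing the extra summand) and that $n-p\ge 0$, i.e.\ $\wedge_e^pV=\{0\}$ for $p>n$ (the assertion your first paragraph takes for granted). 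With this lemma inserted, your proof closes correctly.
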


\subsection{Transformation associated with the decomposition}
We define a linear transformation $T$ on $\wedge^k V$ by 
\[
T(X)=c_0X+c_1\mathcal{L}\Lambda X+c_2\mathcal{L}^2\Lambda^2 X+\dots+c_{[\frac{k}{2}]}\mathcal{L}^{[\frac{k}{2}]}\Lambda^{[\frac{k}{2}]}X=\sum_{i=0}^{[\frac{k}{2}]}c_i\mathcal{L}^i\Lambda^i X
\]
for constants $c_0, c_1, c_2,\dots, c_{[\frac{k}{2}]}$, then we obtain the following : 
\begin{prop}\label{s2p2} 
The transformation $T$ is isomorphic if and only if the constants $c_0, c_1,\dots, c_{[\frac{k}{2}]}$ satisfy 
$\sum_{s=0}^{r} c_s \frac{r!}{(r-s)!}\frac{(n-k+r+s)!}{(n-k+r)!}\neq 0$ for any $r=0,\dots,[\frac{k}{2}]$. 
\end{prop}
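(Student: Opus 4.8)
The plan is to show that $T$ is diagonalized by the primitive decomposition of Proposition~\ref{s2p1}, which reduces the question of invertibility to checking that each eigenvalue is nonzero. Concretely, I would prove that $T$ preserves every summand $\mathcal{L}^r \wedge_e^{k-2r} V$ and acts on it as a single scalar depending only on $r$, and then identify that scalar with the quantity appearing in the statement.

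First I would fix $r$ and a primitive vector $Y \in \wedge_e^{k-2r} V$ and compute $T(\mathcal{L}^r Y)$ term by term. The essential input is formula~(\ref{s2eq3}), applied not to a primitive $k$-vector but to the primitive $(k-2r)$-vector $Y$: substituting the degree $k-2r$ and using $n-(k-2r)-r = n-k+r$ gives
\[
\Lambda^i \mathcal{L}^r Y = \frac{r!}{(r-i)!}\,(n-k+r+1)(n-k+r+2)\cdots(n-k+r+i)\,\mathcal{L}^{r-i}Y
\]
for $i \le r$ and $0$ for $i > r$. Rewriting the product as the factorial ratio $\tfrac{(n-k+r+i)!}{(n-k+r)!}$ and applying $\mathcal{L}^i$ yields $\mathcal{L}^i \Lambda^i \mathcal{L}^r Y = \tfrac{r!}{(r-i)!}\tfrac{(n-k+r+i)!}{(n-k+r)!}\,\mathcal{L}^r Y$, which is a scalar multiple of $\mathcal{L}^r Y$ with the scalar independent of $Y$.

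Summing against the coefficients $c_i$ then gives
\[
T(\mathcal{L}^r Y) = \Bigl(\sum_{s=0}^{r} c_s \frac{r!}{(r-s)!}\frac{(n-k+r+s)!}{(n-k+r)!}\Bigr)\,\mathcal{L}^r Y,
\]
so $\mathcal{L}^r \wedge_e^{k-2r} V$ is an eigenspace of $T$ with eigenvalue $\lambda_r := \sum_{s=0}^{r} c_s \tfrac{r!}{(r-s)!}\tfrac{(n-k+r+s)!}{(n-k+r)!}$. Since Proposition~\ref{s2p1} exhibits $\wedge^k V$ as the (direct) sum of these summands and $T$ acts as $\lambda_r$ on each, $T$ is an isomorphism if and only if $\lambda_r \ne 0$ for every $r$ occurring in the decomposition, which for $k \le n$ is exactly $r=0,\dots,[\tfrac{k}{2}]$ and thus matches the stated condition.

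The step I expect to require the most care — and the main obstacle — is the bookkeeping tying the eigenvalues to the stated index range. One must verify that the product $\prod_{j=1}^{i}(n-k+r+j)$ genuinely collapses to $\tfrac{(n-k+r+i)!}{(n-k+r)!}$, and, more importantly, confirm that the scalar is the same for every primitive $Y$ of the given degree, so that $T$ truly acts by a single eigenvalue on each summand rather than merely preserving it. For $k>n$ the decomposition runs only over $r=k-n,\dots,[\tfrac{k}{2}]$, so only those eigenvalues are actually constrained; the remaining conditions in the statement must then be checked to be either vacuous or automatically satisfied for the \emph{iff} to hold in full.
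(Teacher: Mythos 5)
Your proposal is correct and follows essentially the same route as the paper: apply formula (\ref{s2eq3}) with the degree shifted to $k-2r$ to show that each summand $\mathcal{L}^r\wedge_e^{k-2r}V$ of Proposition~\ref{s2p1} is an eigenspace of $T$ with eigenvalue $\lambda_r=\sum_{s=0}^{r} c_s \frac{r!}{(r-s)!}\frac{(n-k+r+s)!}{(n-k+r)!}$, then read off invertibility from the decomposition. The caveat you flag for $k>n$ --- that only the eigenvalues with $k-n\le r\le[\frac{k}{2}]$ actually constrain invertibility, so the ``only if'' direction over the full range $r=0,\dots,[\frac{k}{2}]$ requires a further check --- is a genuine subtlety, and the paper's own proof passes over it silently (it only notes that equation (\ref{s2eq4}) persists for $k-n\le r\le[\frac{k}{2}]$ and asserts the conclusion); since the paper only ever uses the ``if'' direction, with coefficients making every $\lambda_r$ nonzero, nothing downstream is affected.
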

\begin{proof} 
In the case $k\le n$ and $0\le r \le [\frac{k}{2}]$, 
the equation (\ref{s2eq3}) implies that  
\[
\mathcal{L}^s\Lambda^s \mathcal{L}^r X=
\left\{
\begin{array}{ll}
\frac{r!}{(r-s)!}\frac{(n-k+r+s)!}{(n-k+r)!}\mathcal{L}^r X, &\quad s\le r, \\
0 , &\quad s>r 
\end{array}
\right.
\]
for $X\in \wedge_e^{k-2r}$. 
It yields that 
\begin{equation}
T( \mathcal{L}^r X)=\sum_{s=0}^{ [\frac{k}{2}]} c_s \mathcal{L}^s\Lambda^s \mathcal{L}^r X=\left(\sum_{s=0}^{r} c_s \frac{r!}{(r-s)!}\frac{(n-k+r+s)!}{(n-k+r)!}\right)\mathcal{L}^r X
\label{s2eq4}
\end{equation}
for $X\in \wedge_e^{k-2r}$. 
In the case $k>n$ and $k-n\le r \le [\frac{k}{2}]$, we also have the same equation (\ref{s2eq4}) for $X\in \wedge_e^{k-2r}$. 
Hence Proposition~\ref{s2p1} implies that $T$ is an isomorphism from $\wedge^k V$ to itself 
if and only if $\sum_{s=0}^{r} c_s \frac{r!}{(r-s)!}\frac{(n-k+r+s)!}{(n-k+r)!}\neq 0$ for each $r=0,\dots,[\frac{k}{2}]$. 
\end{proof} 


\section{Splitting of sheaves on complex contact manifolds}
Let $(M, \gamma)$ be a complex contact manifold of dimension $2n+1$ and $L$ the line bundle associated with the contact structure $\gamma$. 
We denote by $D$ the subbundle $\Ker \gamma$ of $\mathbb{T}$. 
Let $\nabla$ be a connection of $L$ with $\nabla^{0,1}=\bar{\partial}$. 
The covariant exterior differentiation $d^{\nabla}\gamma$ of $\gamma$ is a smooth section of $L\otimes \bigwedge^2 \mathbb{T}^*$.

\subsection{The isomorphisms associated with $d^{\nabla}\gamma$}
Let $d^{\nabla}\gamma|_D$ denote the restriction of $d^{\nabla}\gamma$ to $D$. 
Then, $d^{\nabla}\gamma|_D$ is a holomorphic section of $L\otimes \bigwedge^2 D^*$ which is independent of the choice of a connection $\nabla$. 
We identify $d^{\nabla}\gamma|_D$ with the holomorphic bundle map from $D$ to $L\otimes D^*$. 
Then the map $d^{\nabla}\gamma|_D: D\to L\otimes D^*$ is isomorphic since $d\gamma_i$ is non-degenerate on $D$. 
In general, the $k$-th tensor $\otimes^k d^{\nabla}\gamma$ of $d^{\nabla}\gamma$ is a smooth section of $L^k\otimes^{k}\bigwedge^2 \mathbb{T}^*$ 
which is regarded as the smooth bundle map 
\begin{equation*}
\otimes^k d^{\nabla}\gamma : \bigwedge^k \mathbb{T} \to L^k\otimes\bigwedge^k \mathbb{T}^*.\label{s3eq1}
\end{equation*}
Let $e$ be a local frame of $L$ and $A$ a connection form of $\nabla$ with respect to $e$. 
The contact structure $\gamma$ is given by $\gamma=e\otimes \gamma_0$ for a holomorphic $1$-form $\gamma_0$. 
Then $d^{\nabla}\gamma=e\otimes (d\gamma_0 +A\wedge \gamma_0)$. 
For $1$-vectors $v_1,\dots , v_k$, 
the $L^k$-valued $k$-form $\otimes^k d^{\nabla}\gamma(v_1\wedge \cdots \wedge v_k)$ is written by
\begin{eqnarray*}
&&e^k\otimes \{(d\gamma_0 +A\wedge \gamma_0)(v_1)\wedge\cdots \wedge(d\gamma_0 +A\wedge \gamma_0)(v_k)\} \\
&=&e^k\otimes \{\otimes^{k} d\gamma_0(v_1 \wedge\cdots \wedge v_k)+\gamma_0\wedge (\otimes^{k-1}d\gamma_0)(A(v_1\wedge \cdots \wedge v_k))\\
&&-A\wedge (\otimes^{k-1}d\gamma_0)(\gamma_0(v_1\wedge \cdots \wedge v_k)) \}. 
\end{eqnarray*}
It implies 
\begin{equation}
\otimes^k d^{\nabla}\gamma(X)=e^k\otimes \{\otimes^k d\gamma_0(X)+\gamma_0 
\wedge (\otimes^{k-1}d\gamma_0)(A(X))-A\wedge (\otimes^{k-1}d\gamma_0)(\gamma_0(X))\} \label{s3eq3}
\end{equation}
for any $k$-vector $X$. 
We remark that the map $\otimes^k d^{\nabla}\gamma$ is not holomorphic whenever $\nabla$ is holomorphic. 
However, the restriction $\otimes^k d^{\nabla}\gamma|_D$ is the holomorphic map from $\bigwedge^k D$ to $L^k\otimes\bigwedge^k D^*$. 
The induced map $\otimes^k d^{\nabla}\gamma|_D: \mathcal{O}(\otimes \bigwedge^k D) \to \mathcal{O}(L^{k}\otimes\bigwedge^k D^*)$ is isomorphic on sheaves, 
and it is extended to an isomorphism 
\begin{equation*}
\otimes^k d^{\nabla}\gamma|_D : \mathcal{O}(L^m\otimes \bigwedge^k D) \to \mathcal{O}(L^{k+m}\otimes\bigwedge^k D^*) \label{s3eq2}
\end{equation*}
for each $m\in\mathbb{Z}$ and $k=1,\dots,2n+1$. 

In the case $m=-1$ and $k=2$, we have the isomorphism $\otimes^2 d^{\nabla}\gamma|_D : \mathcal{O}(L^{-1}\otimes \bigwedge^2 D) \to \mathcal{O}(L\otimes\bigwedge^2 D^*)$. 
By considering $d^{\nabla}\gamma|_D$ as a holomorphic section of $L\otimes\bigwedge^2 D^*$, 
there exists a holomorphic section $w$ of $L^{-1}\otimes \wedge^2 D$ such that 
\[
\otimes^2 d^{\nabla}\gamma|_D(w)=d^{\nabla}\gamma|_D 
\]
as $w_0$ in Section 2.1. 
The section $w$ is independent of the choice of a connection $\nabla$. 

\subsection{The map $F:\bigwedge^k \mathbb{T} \to L^k\otimes \bigwedge^k \mathbb{T}^*$}
We fix an integer $k$ such that $1\le k\le 2n+1$ and define a map $F:\bigwedge^k \mathbb{T} \to L^k\otimes \bigwedge^k \mathbb{T}^*$ by 
\[
F(X)=\sum_{i=0}^{[\frac{k}{2}]}\frac{(k-i)!}{k!i!}\otimes^{k-2i} d^{\nabla}\gamma((d^{\nabla}\gamma)^i(X))\wedge (d^{\nabla}\gamma)^i
\]
for $X\in \bigwedge^k \mathbb{T}$, where $(d^{\nabla}\gamma)^0$ is the identity map ${\rm id}$ and 
$(d^{\nabla}\gamma)^i$ is the $i$-th wedge $d^{\nabla}\gamma\wedge \dots\wedge d^{\nabla}\gamma$ of $d^{\nabla}\gamma$. 
We have 
\begin{equation}
(d^{\nabla}\gamma)^i(X)=e^i\otimes \{(d\gamma_0)^i(X)-i(d\gamma_0)^{i-1}(A(\gamma_0(X)))\} \label{s3eq4}
\end{equation}
for each $i=0,\dots ,[\frac{k}{2}]$ since $(d^{\nabla}\gamma)^i=e^i\otimes \{(d\gamma_0)^i+i A\wedge\gamma_0 \wedge (d\gamma_0)^{i-1} \}$. 
The equations (\ref{s3eq3}) and (\ref{s3eq4}) imply that 
\begin{eqnarray}
&&\otimes^{k-2i} d^{\nabla}\gamma((d^{\nabla}\gamma)^i(X)) \notag\\
&=&e^{k-i}\otimes \{ \otimes^{k-2i} d\gamma_0((d\gamma_0)^i(X))-i\otimes^{k-2i} d\gamma_0((d\gamma_0)^{i-1}(A(\gamma_0(X)))) \notag\\
&&+\gamma_0\wedge (\otimes^{k-1-2i}d\gamma_0)((d\gamma_0)^i(A(X)))-A\wedge (\otimes^{k-1-2i}d\gamma_0)((d\gamma_0)^i(\gamma_0(X))) \} \quad \label{s3eq5}
\end{eqnarray}
where we use $A((d\gamma_0)^{i-1}(A(\gamma_0(X))))=\gamma_0((d\gamma_0)^{i-1}(A(\gamma_0(X)))=0$ in the first line. 
It yields that 
\begin{eqnarray}
\otimes^{k-2i} d^{\nabla}\gamma((d^{\nabla}\gamma)^i(X))|_D
&=&e^{k-i}\otimes \{ \otimes^{k-2i} d\gamma_0((d\gamma_0)^i(X))-i\otimes^{k-2i} d\gamma_0((d\gamma_0)^{i-1}(A(\gamma_0(X)))) \notag\\
&&-A\wedge (\otimes^{k-1-2i}d\gamma_0)((d\gamma_0)^i(\gamma_0(X))) \}. \label{s3eq6}
\end{eqnarray}
The second term in the right hand side of the equation (\ref{s3eq6}) is $0$ except for $1\le i\le [\frac{k}{2}]$ and 
the third term is $0$ except for $0\le i\le [\frac{k-1}{2}]$. Hence 
\begin{eqnarray}
F(X)|_D
&=&\sum_{i=0}^{[\frac{k}{2}]}\frac{(k-i)!}{k!i!}\{ \otimes^{k-2i} d^{\nabla}\gamma((d^{\nabla}\gamma)^i(X))\wedge (d^{\nabla}\gamma)^i \}|_D \notag\\
&=&e^{k}\otimes \Big\{ \sum_{i=0}^{[\frac{k}{2}]}\frac{(k-i)!}{k!i!}\{ \otimes^{k-2i} d\gamma_0((d\gamma_0)^i(X)) \wedge (d\gamma_0)^i \notag\\
&&-\sum_{i=1}^{[\frac{k}{2}]}\frac{(k-i)!}{k!(i-1)!}\otimes^{k-2i} d\gamma_0((d\gamma_0)^{i-1}(A(\gamma_0(X)))) \wedge (d\gamma_0)^i \notag\\
&&-\sum_{i=0}^{[\frac{k-1}{2}]}\frac{(k-i)!}{k!i!}A\wedge (\otimes^{k-1-2i}d\gamma_0)((d\gamma_0)^i(\gamma_0(X))) \wedge (d\gamma_0)^i \Big\} \Big|_D  \label{s3eq6.5}.
\end{eqnarray}
The restriction $F|_{D}:\bigwedge^k D \to L^k\bigwedge^k D^*$ satisfies $F|_{D}(X)=F(X)|_{D}$ for $X\in \bigwedge^k D$. 
If $X$ is a holomorphic section of $\bigwedge^k D$, then it follows from $\gamma_0(X)=0$ that 
\[
F|_D(X)=F(X)|_D = e^{k}\otimes \Big\{ \sum_{i=0}^{[\frac{k}{2}]}\frac{(k-i)!}{k!i!} \otimes^{k-2i} d\gamma_0((d\gamma_0)^i(X)) \wedge (d\gamma_0)^i\Big\}  \Big|_D
\]
is a holomorphic section of $L^k\otimes \bigwedge^k D^*$. 
Hence we obtain the following : 
\begin{lem}\label{s3l1} 
An $L^k$-valued $k$-form $F|_{D}(X)$ restricted to $D$ is holomorphic for any holomorphic section $X$ of $\bigwedge^k D$. $\hfill\Box$
\end{lem}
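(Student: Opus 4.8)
The plan is to read off holomorphy directly from the local expression for $F(X)|_D$ computed in equation (\ref{s3eq6.5}), exploiting that a section of $\bigwedge^k D$ is annihilated by $\gamma_0$. The conceptual obstacle is that $F$ is assembled from the bundle maps $\otimes^{k-2i}d^{\nabla}\gamma$ and $(d^{\nabla}\gamma)^i$, and, as remarked after (\ref{s3eq3}), these are \emph{not} holomorphic even when $\nabla$ is a holomorphic connection, because they involve the connection form $A$. Hence one cannot conclude holomorphy of $F|_D(X)$ term by term. The key point that resolves this will be that every non-holomorphic contribution to $F(X)|_D$ carries a factor $\gamma_0(X)$, which vanishes on $\bigwedge^k D$; isolating this factor is exactly the content of the bookkeeping carried out in (\ref{s3eq3})--(\ref{s3eq6.5}).

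Concretely, I would first recall that $D=\Ker\gamma$ and $\gamma=e\otimes\gamma_0$, so for a local holomorphic section $X$ of $\bigwedge^k D$ one has $\gamma_0(X)=0$ under the contraction action of $\gamma_0$ on $k$-vectors introduced in Section~2.2 (if $X=v_1\wedge\dots\wedge v_k$ with each $v_j\in\Ker\gamma$, then $\gamma_0(v_j)=0$ for all $j$). Substituting $\gamma_0(X)=0$ into the general formula (\ref{s3eq6.5}) for $F(X)|_D$, I would inspect its three summands: the second is of the form $\otimes^{k-2i}d\gamma_0\big((d\gamma_0)^{i-1}(A(\gamma_0(X)))\big)$ and the third is $A\wedge(\otimes^{k-1-2i}d\gamma_0)\big((d\gamma_0)^i(\gamma_0(X))\big)$, so both carry the factor $\gamma_0(X)$ and vanish identically. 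Only the first summand survives, leaving
\[
F|_D(X)=e^{k}\otimes\Big\{\sum_{i=0}^{[\frac{k}{2}]}\frac{(k-i)!}{k!i!}\,\otimes^{k-2i}d\gamma_0\big((d\gamma_0)^i(X)\big)\wedge(d\gamma_0)^i\Big\}\Big|_D.
\]

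Finally I would verify that this surviving expression is holomorphic. One may take $e$ to be a holomorphic local frame of $L$, so that $e^k$ is a holomorphic frame of $L^k$; by hypothesis $\gamma_0$ is holomorphic, and then $d\gamma_0$ is holomorphic as well, since $\bar\partial(d\gamma_0)=d(\bar\partial\gamma_0)=0$. As $X$ is holomorphic, every factor appearing in the displayed formula is holomorphic, and therefore so is $F|_D(X)$. The essential step, which is where the real computation lies, is the passage (\ref{s3eq3})--(\ref{s3eq6.5}) that confines all dependence on the non-holomorphic connection form $A$ to terms proportional to $\gamma_0(X)$; granting that, restricting to $\bigwedge^k D$ discards precisely those terms and leaves a manifestly holomorphic $L^k$-valued $k$-form.
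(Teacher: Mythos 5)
Your proposal is correct and is essentially the paper's own argument: the paper likewise obtains Lemma~\ref{s3l1} by substituting $\gamma_0(X)=0$ into the restricted expression (\ref{s3eq6.5}), so that the two summands carrying the connection form $A$ vanish and only the manifestly holomorphic term $e^{k}\otimes\big\{\sum_{i}\frac{(k-i)!}{k!i!}\otimes^{k-2i}d\gamma_0((d\gamma_0)^i(X))\wedge(d\gamma_0)^i\big\}\big|_D$ survives. Your additional remark that the non-holomorphic contributions are exactly those proportional to $\gamma_0(X)$ (after restriction to $D$ has already killed the $\gamma_0\wedge(\cdots)(A(X))$ term) is a faithful account of the bookkeeping in (\ref{s3eq3})--(\ref{s3eq6.5}).
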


The lemma implies that $F|_D$ is regarded as a map from $\mathcal{O}(\bigwedge^k D)$ to $\mathcal{O}(L^k\otimes \bigwedge^k D^*)$. 
The map $F|_{D}$ is written by 
\[
F|_{D}(X)=\otimes^k d^{\nabla}\gamma|_D\Big(\sum_{i=0}^{[\frac{k}{2}]}\frac{(k-i)!}{k!i!}(d^{\nabla}\gamma|_D)^i(X)\wedge (w)^i\Big)
\]
for $X\in \mathcal{O}(\bigwedge^k D)$ 
since $(d^{\nabla}\gamma|_D)^i=\otimes^{2i} d^{\nabla}\gamma|_D(w)^i$. 
We define the transformation $f: \mathcal{O}(\bigwedge^k D)\to \mathcal{O}(\bigwedge^k D)$ by  
\[
f(X)=\sum_{i=0}^{[\frac{k}{2}]}\frac{(k-i)!}{k!i!}(d^{\nabla}\gamma|_D)^i(X)\wedge (w)^i
\]
for $X\in \mathcal{O}(\bigwedge^k D)$. 
Then the map $F|_{D}$ is the composition of $\otimes^k d^{\nabla}\gamma|_D$ and $f$. 
Proposition~\ref{s2p2} implies that the following : 
\begin{prop}\label{s3p1} 
The map $F|_{D}:\mathcal{O}(\bigwedge^k D) \to \mathcal{O}(L^k\otimes \bigwedge^k D^*)$ is isomorphic. 
\end{prop}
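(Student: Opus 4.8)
The plan is to factor $F|_D$ and reduce the statement to the invertibility of the purely algebraic operator $f$. The discussion preceding the proposition records the factorization $F|_D = (\otimes^k d^{\nabla}\gamma|_D)\circ f$, and Section 3.1 already shows that $\otimes^k d^{\nabla}\gamma|_D\colon \mathcal{O}(\bigwedge^k D)\to \mathcal{O}(L^k\otimes\bigwedge^k D^*)$ is an isomorphism of sheaves. Hence it suffices to prove that $f\colon \mathcal{O}(\bigwedge^k D)\to\mathcal{O}(\bigwedge^k D)$ is an isomorphism; and since $f$ is given fibrewise by an algebraic formula (the $L$-twists of $(d^{\nabla}\gamma|_D)^i$ and of $w^i$ cancelling), it is enough to check that $f$ is invertible on each fibre $D_x$.

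First I would set up the fibrewise dictionary with Section 2: identify $(D_x, d\gamma_0|_{D_x})$ with the symplectic vector space $(V,\omega)$, so that $\omega = d\gamma_0|_{D_x}$ and the bivector $w$ of Section 3.1 is exactly the $w_0$ of Section 2.1. Under this identification, wedging with $w$ is the operator $\mathcal{L}$ and contraction by $d^{\nabla}\gamma|_D$ is $\Lambda$, so that the summand $(d^{\nabla}\gamma|_D)^i(X)\wedge w^i$ becomes $\mathcal{L}^i\Lambda^i X$. Thus $f$ is precisely the transformation
\[
T=\sum_{i=0}^{[\frac{k}{2}]} c_i\,\mathcal{L}^i\Lambda^i, \qquad c_i=\frac{(k-i)!}{k!\,i!},
\]
of Section 2.3. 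Proposition~\ref{s2p2} then reduces the invertibility of $f$ to the numerical conditions
\[
S_r:=\sum_{s=0}^{r} c_s\,\frac{r!}{(r-s)!}\,\frac{(n-k+r+s)!}{(n-k+r)!}\neq 0
\]
for the indices $r$ occurring in the primitive decomposition of Proposition~\ref{s2p1}.

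The last step is to verify $S_r\neq 0$, and here a single observation removes any fear of cancellation. By Proposition~\ref{s2p1} the contributing indices are $0\le r\le[\frac{k}{2}]$ when $k\le n$ and $k-n\le r\le[\frac{k}{2}]$ when $k>n$; in either case $n-k+r\ge 0$. For such $r$ every factor $c_s$, $\frac{r!}{(r-s)!}$ and $\frac{(n-k+r+s)!}{(n-k+r)!}=\prod_{j=1}^{s}(n-k+r+j)$ is strictly positive, so $S_r>0$. (In fact a short computation yields the closed form $S_r=\frac{(n+r+1)!\,(k-r)!}{(n+1)!\,k!}$, manifestly positive.) Since $f$ acts as the nonzero scalar $S_r$ on the $r$-th primitive summand and these summands span $\bigwedge^k D_x$, the operator $f$ is an isomorphism, and therefore so is $F|_D$. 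I expect the genuine obstacle to be not the algebra of $S_r$ but the bookkeeping of the second step: confirming that the wedge-power and contraction conventions reproduce $T$ with exactly the coefficients $c_i=\frac{(k-i)!}{k!\,i!}$ (so that $(d^{\nabla}\gamma|_D)^i=\Lambda^i$ with no stray combinatorial factor and $\wedge w^i=\mathcal{L}^i$), so that Proposition~\ref{s2p2} applies verbatim; once that match is secured, the positivity of $S_r$ makes the nonvanishing immediate.
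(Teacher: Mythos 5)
Your proposal is correct and follows essentially the same route as the paper: factor $F|_D=(\otimes^k d^{\nabla}\gamma|_D)\circ f$, identify $f$ fibrewise with $T=\sum_i \frac{(k-i)!}{k!\,i!}\mathcal{L}^i\Lambda^i$, and invoke Proposition~\ref{s2p2}. If anything, you are slightly more careful than the paper's own proof, which simply says the coefficients $c_i$ are positive, whereas you spell out why positivity of the $c_i$ forces $S_r\neq 0$ (all factors nonnegative since $n-k+r\ge 0$ on the contributing range of $r$).
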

\begin{proof} 
It suffices to show that $f$ is isomorphic since $\otimes^k d^{\nabla}\gamma|_D$ is isomorphic. 
At each point $x\in M$, the linear map $f_x:\bigwedge^k D_x\to \bigwedge^k D_x$ is written by 
\[
f_x=\sum_{i=0}^{ [\frac{k}{2}]} \frac{(k-i)!}{k!i!} \mathcal{L}^i\Lambda^i 
\]
where $\mathcal{L}$ and $\Lambda$ are operators as in the previous section associated with the symplectic structure $(d^{\nabla}\gamma|_D)_x$ on $D_x$. 
The map $f_x$ is isomorphic by Proposition~\ref{s2p2} since each coefficient $c_i=\frac{(k-i)!}{k!i!}$ is positive. 
Hence $F|_{D}$ is also isomorphic. 
\end{proof}

\subsection{The map $G:\Gamma(L\otimes\bigwedge^{k-1} D) \to \Gamma(L^k\otimes\bigwedge^k \mathbb{T}^*)$}
We define a map $G:\Gamma(L\otimes\bigwedge^{k-1} D) \to \Gamma(L^k\otimes\bigwedge^k \mathbb{T}^*)$ by 
\[
G(s)=\sum_{i=0}^{[\frac{k-1}{2}]}\frac{(k-1-i)!}{k!i!}d^{\nabla}\left( \otimes^{k-1-2i}d^{\nabla}\gamma((d^{\nabla}\gamma)^i(s))\right)\wedge (d^{\nabla}\gamma)^i
\]
for $s\in \Gamma(L\otimes\bigwedge^{k-1}D)$. 
Let $s$ be a section of $L\otimes\bigwedge^{k-1}D$. 
The section $s$ is locally written as $s=e\otimes s_0$ for a section $s_0\in \bigwedge^{k-1}D$. 
It follows from the equations (\ref{s3eq5}) and $(d^{\nabla}\gamma)^i(s)=e^{i+1}\otimes (d\gamma_0)^i(s_0)$ that 
\begin{equation}
\otimes^{k-1-2i} d^{\nabla}\gamma((d^{\nabla}\gamma)^i(s))
=e^{k-i}\otimes \{ \otimes^{k-1-2i} d\gamma_0((d\gamma_0)^i(s_0)+\gamma_0\wedge (\otimes^{k-2-2i}d\gamma_0)((d\gamma_0)^i(A(s_0))) \}. \label{s3eq7}
\end{equation}
We remark that the second term in the right hand side of the equation (\ref{s3eq7}) is $0$ except for $0\le i\le [\frac{k}{2}]-1$. 
It yields that 
\begin{eqnarray*}
d^{\nabla} \otimes^{k-1-2i} d^{\nabla}\gamma((d^{\nabla}\gamma)^i(s))|_D
&=&e^{k-i}\otimes \{ d \otimes^{k-1-2i} d\gamma_0((d\gamma_0)^i(s_0) \\
&&+d\gamma_0\wedge (\otimes^{k-2-2i}d\gamma_0)((d\gamma_0)^i(A(s_0)))\\
&& +(k-i)A \wedge \otimes^{k-1-2i} d\gamma_0((d\gamma_0)^i(s_0)) \} |_D. 
\end{eqnarray*}
Hence 
\begin{eqnarray}
G(s)|_D&=&\sum_{i=0}^{[\frac{k-1}{2}]}\frac{(k-1-i)!}{k!i!} d^{\nabla} (\otimes^{k-1-2i} d^{\nabla}\gamma((d^{\nabla}\gamma)^i(s))|_D) \wedge (d^{\nabla}\gamma)^i|_D \notag\\
&=&e^{k}\otimes \Big\{ \sum_{i=0}^{[\frac{k-1}{2}]}\frac{(k-1-i)!}{k!i!} d (\otimes^{k-1-2i} d\gamma_0((d\gamma_0)^i(s_0)))\wedge (d\gamma_0)^i  \notag\\
&&+\sum_{i=1}^{[\frac{k}{2}]}\frac{(k-i)!}{k!(i-1)!} (\otimes^{k-2i}d\gamma_0)((d\gamma_0)^{i-1}(A(s_0)))\wedge (d\gamma_0)^i  \notag\\
&& +\sum_{i=0}^{[\frac{k-1}{2}]}\frac{(k-i)!}{k!i!}A \wedge \otimes^{k-1-2i} d\gamma_0((d\gamma_0)^i(s_0))\wedge (d\gamma_0)^i \Big\} \Big|_D. \label{s3eq8}
\end{eqnarray}
Then we have 
\begin{prop}\label{s3p2} 
An $L^k$-valued $k$-form $\{F(X)+G(\gamma(X))\}|_{D}$ restricted to $D$ is holomorphic for any holomorphic $k$-vector $X$. 
\end{prop}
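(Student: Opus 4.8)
The plan is to add the two local expansions already in hand---$F(X)|_{D}$ from (\ref{s3eq6.5}) and $G(\gamma(X))|_{D}$ from (\ref{s3eq8})---and to show that every term involving the connection form $A$, which is the sole non-holomorphic ingredient, cancels. First I would fix a local frame $e$ of $L$ with connection form $A$, so that $\gamma=e\otimes\gamma_0$, and observe that for any $k$-vector $X$ the contraction $\gamma(X)$ lies in $\mathcal{O}(L\otimes\bigwedge^{k-1}D)$: pointwise the image of $\gamma_0:\bigwedge^{k}\mathbb{T}\to\bigwedge^{k-1}\mathbb{T}$ is exactly $\bigwedge^{k-1}D$, as encoded in the short exact sequence of the introduction. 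Hence $\gamma(X)=e\otimes s_0$ with $s_0=\gamma_0(X)\in\bigwedge^{k-1}D$, and formula (\ref{s3eq8}) applies verbatim to $s=\gamma(X)$ with this $s_0$.

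The heart of the argument is the term-by-term matching of the $A$-contributions. In (\ref{s3eq6.5}) the connection form occurs in exactly two families: the terms $-\frac{(k-i)!}{k!(i-1)!}\otimes^{k-2i}d\gamma_0((d\gamma_0)^{i-1}(A(\gamma_0(X))))\wedge(d\gamma_0)^i$ for $1\le i\le[\frac{k}{2}]$, and the terms $-\frac{(k-i)!}{k!i!}A\wedge(\otimes^{k-1-2i}d\gamma_0)((d\gamma_0)^i(\gamma_0(X)))\wedge(d\gamma_0)^i$ for $0\le i\le[\frac{k-1}{2}]$. In (\ref{s3eq8}), after substituting $s_0=\gamma_0(X)$, the $A$-families are $+\frac{(k-i)!}{k!(i-1)!}(\otimes^{k-2i}d\gamma_0)((d\gamma_0)^{i-1}(A(s_0)))\wedge(d\gamma_0)^i$ over the same range $1\le i\le[\frac{k}{2}]$, and $+\frac{(k-i)!}{k!i!}A\wedge\otimes^{k-1-2i}d\gamma_0((d\gamma_0)^i(s_0))\wedge(d\gamma_0)^i$ over $0\le i\le[\frac{k-1}{2}]$. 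Since $s_0=\gamma_0(X)$, the coefficients, the summation ranges, and the arguments all coincide while the signs are opposite, so these families cancel identically.

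What remains after the cancellation is
\[
\{F(X)+G(\gamma(X))\}|_{D}=e^{k}\otimes\Big\{\sum_{i=0}^{[\frac{k}{2}]}\tfrac{(k-i)!}{k!i!}\otimes^{k-2i}d\gamma_0((d\gamma_0)^i(X))\wedge(d\gamma_0)^i+\sum_{i=0}^{[\frac{k-1}{2}]}\tfrac{(k-1-i)!}{k!i!}d\big(\otimes^{k-1-2i}d\gamma_0((d\gamma_0)^i(\gamma_0(X)))\big)\wedge(d\gamma_0)^i\Big\}\Big|_{D}.
\]
Every ingredient here is holomorphic: $X$ is a holomorphic $k$-vector, $\gamma_0$ and $d\gamma_0$ are holomorphic, and the exterior derivative of a holomorphic form is again holomorphic (on a holomorphic form $\bar{\partial}$ vanishes, so $d=\partial$ preserves holomorphicity). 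Restricting a holomorphic $L^{k}$-valued $k$-form to the holomorphic subbundle $D$ keeps it holomorphic, which gives the claim.

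The only delicate point I anticipate is the bookkeeping in the second step: one must check that the coefficients $\frac{(k-i)!}{k!(i-1)!}$ and $\frac{(k-i)!}{k!i!}$ together with the index ranges in (\ref{s3eq6.5}) and (\ref{s3eq8}) agree \emph{exactly}, so that the $A$-dependent terms cancel rather than merely simplify. The holomorphicity of the surviving expression is then immediate, and this cancellation is precisely what pins down the particular normalizing coefficients chosen in the definitions of $F$ and $G$.
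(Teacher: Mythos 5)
Your proposal is correct and is essentially the paper's own proof: the authors likewise substitute $s=\gamma(X)$, $s_0=\gamma_0(X)$ into (\ref{s3eq8}), add it to (\ref{s3eq6.5}), and record that what survives is exactly the two $A$-free sums you display, which are manifestly holomorphic. The only difference is presentational—you spell out the term-by-term cancellation of the two $A$-families (matching coefficients, ranges, and signs), which the paper leaves implicit in stating the summed formula.
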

\begin{proof} 
If we take $s$ as the image $\gamma(X)$ of a $k$-vector $X$, then $s_0=\gamma_0(X)$, 
and it follows from the equations (\ref{s3eq6.5}) and (\ref{s3eq8}) that 
\begin{eqnarray*}
\{F(X)+G(s)\}|_{D}&=&e^{k}\otimes \Big\{ \sum_{i=0}^{[\frac{k}{2}]}\frac{(k-i)!}{k!i!}\{ \otimes^{k-2i} d\gamma_0((d\gamma_0)^i(X))\wedge (d\gamma_0)^i \\
&&+\sum_{i=0}^{[\frac{k-1}{2}]}\frac{(k-1-i)!}{k!i!} d(\otimes^{k-1-2i} d\gamma_0((d\gamma_0)^i(\gamma_0(X))))\wedge (d\gamma_0)^i \Big\} \Big|_D. 
\end{eqnarray*}
Hence $\{F(X)+G(\gamma(X))\}|_{D}$ is holomorphic for a holomorphic $k$-vector $X$. 
\end{proof} 

\subsection{The splitting of sheaves $\mathcal{O}(\bigwedge^k\mathbb{T})$ as $\mathbb{C}$-module}

We have the following theorem:
\begin{thm}\label{s3t1} 
Let $k$ be an integer from $1$ to $2n+1$. 
The sequence 
\[
0\to \mathcal{O}(\bigwedge^k D) \to \mathcal{O}(\bigwedge^k \mathbb{T}) \to \mathcal{O}(L\otimes \bigwedge^{k-1} D) \to 0
\] 
splits as sheaves of $\mathbb{C}$-module. 
In particular, the sequence 
\[
0\to H^i(\bigwedge^k D) \to H^i(\bigwedge^k \mathbb{T}) \to H^i(L\otimes \bigwedge^{k-1} D) \to 0
\] 
is exact for each $i=0,\dots,2n+1$. 
\end{thm}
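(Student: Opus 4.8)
The plan is to split the sequence from the left, by exhibiting a $\mathbb{C}$-linear retraction $P : \mathcal{O}(\bigwedge^k \mathbb{T}) \to \mathcal{O}(\bigwedge^k D)$ of the inclusion $\iota : \mathcal{O}(\bigwedge^k D) \hookrightarrow \mathcal{O}(\bigwedge^k \mathbb{T})$, that is, a map satisfying $P \circ \iota = \mathrm{id}$; the existence of such a retraction is equivalent to splitting. The raw materials are already in place. Proposition~\ref{s3p1} asserts that $F|_D : \mathcal{O}(\bigwedge^k D) \to \mathcal{O}(L^k \otimes \bigwedge^k D^*)$ is an isomorphism, and Proposition~\ref{s3p2} asserts that the combination $\{F(X) + G(\gamma(X))\}|_D$ is a \emph{holomorphic} section of $L^k \otimes \bigwedge^k D^*$ for every holomorphic $k$-vector $X$. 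These two facts together attach to each $X$ a canonical holomorphic $k$-vector in $\bigwedge^k D$.

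Concretely, I would set
\[
P(X) = (F|_D)^{-1}\bigl(\{F(X) + G(\gamma(X))\}|_D\bigr)
\]
for $X \in \mathcal{O}(\bigwedge^k \mathbb{T})$. This is well defined: Proposition~\ref{s3p2} places the argument $\{F(X)+G(\gamma(X))\}|_D$ in $\mathcal{O}(L^k\otimes\bigwedge^k D^*)$, and Proposition~\ref{s3p1} returns it via $(F|_D)^{-1}$ to $\mathcal{O}(\bigwedge^k D)$. Since $F$, $G$, $\gamma$, restriction to $D$, and $(F|_D)^{-1}$ are each $\mathbb{C}$-linear and local, $P$ is a morphism of sheaves of $\mathbb{C}$-module. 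To check that $P$ retracts $\iota$, I would take $X\in\mathcal{O}(\bigwedge^k D)$: then $\gamma(X)=0$, because $\bigwedge^k D$ is exactly the kernel of the extended map $\gamma$, so $G(\gamma(X))=0$; moreover $F(X)|_D = F|_D(X)$ by the defining property of $F|_D$. Hence $P(X) = (F|_D)^{-1}(F|_D(X)) = X$, which is precisely $P\circ\iota = \mathrm{id}$.

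A retraction of the inclusion splits the short exact sequence of sheaves of $\mathbb{C}$-module, proving the first assertion. For the cohomology statement I would apply the functor $H^i(M,-)$, which is defined on sheaves of abelian groups and hence on sheaves of $\mathbb{C}$-module. The retraction induces $H^i(P)$ with $H^i(P)\circ H^i(\iota) = H^i(\mathrm{id}) = \mathrm{id}$, so $H^i(\iota)$ is injective. In the long exact cohomology sequence this forces every connecting homomorphism $H^{i-1}(L\otimes\bigwedge^{k-1}D)\to H^i(\bigwedge^k D)$ to vanish, and the long sequence thereby decomposes into the short exact (indeed split) sequences claimed, for $i=0,\dots,2n+1$.

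I do not expect a genuine obstacle at this stage, since the two supporting propositions already carry the analytic weight. The only delicate point, were those propositions unavailable, is holomorphy: the naive candidate $(F|_D)^{-1}(F(X)|_D)$ does not land in holomorphic sections, because $F$ is merely smooth (it is built from the non-holomorphic connection $\nabla$). The correction term $G(\gamma(X))$ is engineered precisely to cancel the connection-form contributions upon restriction to $D$, and Proposition~\ref{s3p2} is the statement that this cancellation succeeds; that is the true heart of the argument, and it is already established. It is also worth noting that $G$ involves the first-order operator $d^{\nabla}$, so $P$ is $\mathbb{C}$-linear but not $\mathcal{O}$-linear, which is consistent with LeBrun's result that no $\mathcal{O}$-module splitting exists.
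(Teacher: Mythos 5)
Your proof is correct, and it rests on exactly the same two pillars as the paper's proof --- Proposition~\ref{s3p1} (that $F|_D$ is an isomorphism on holomorphic sections) and Proposition~\ref{s3p2} (that $\{F(X)+G(\gamma(X))\}|_D$ is holomorphic) --- but it splits the sequence from the left, whereas the paper splits it from the right. The paper constructs a right inverse of $\gamma\colon\mathcal{O}(\bigwedge^k\mathbb{T})\to\mathcal{O}(L\otimes\bigwedge^{k-1}D)$: given a holomorphic section $s$, it chooses a local holomorphic lift $Y$ with $\gamma(Y)=s$ over a trivializing open set, corrects it to $X=Y-h$ with $h=(F|_D)^{-1}\bigl(\{F(Y)+G(s)\}|_D\bigr)$, and then glues the local solutions by a uniqueness argument: the two conditions $\gamma(X)=s$ and $\{F(X)+G(s)\}|_D=0$ pin $X$ down on overlaps. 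Your retraction $P=(F|_D)^{-1}\circ\bigl(\{F(\cdot)+G(\gamma(\cdot))\}|_D\bigr)$ is defined globally in one stroke, so the local lifting and the gluing step disappear entirely; that is a genuine streamlining, and your verifications of $P\circ\iota=\mathrm{id}$ (using that $\bigwedge^k D=\Ker\gamma$ kills the $G$-term) and of the cohomological consequence via vanishing of the connecting homomorphisms are complete. It is worth noting that the two arguments produce the very same splitting: the section of $\gamma$ induced by your $P$ sends $s$ to the unique $X$ with $\gamma(X)=s$ and $P(X)=0$, i.e.\ $\{F(X)+G(s)\}|_D=0$, which is precisely the paper's characterization of its lift. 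What the paper's right-inverse formulation buys is the explicit solution operator $s\mapsto X$, which generalizes the $k=1$ correspondence $\mathcal{O}(L)\cong\mathfrak{aut}(M,\gamma)$ of Nitta--Takeuchi and is the form used in Remark~\ref{s3r1} to see that the splitting is independent of $\nabla$ --- an observation that applies verbatim to your $P$, since both of its ingredients are $\nabla$-independent by Propositions~\ref{s3p1} and~\ref{s3p2}.
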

\begin{proof} 
Let $s$ be a holomorphic section of $L\otimes \bigwedge^{k-1} D$. 
We take an open set $U$ of $M$ where the bundles $L$ and $D$ are trivial. 
Then we can take a holomorphic $k$-vector $Y$ on $U$ such that $\gamma(Y)=s$ as follows. 
We fix a local frame $e$ of $L$ on $U$. 
The contact form $\gamma$ is given by $\gamma=e\otimes \gamma_0$ for a holomorphic 1-form $\gamma_0$ on $U$. 
We can choose a local frame $\{e_1,\dots,e_{2n}\}$ of $D$ and a local section $e_{2n+1}$ of $\mathbb{T}$ on $U$ 
such that $\gamma (e_{2n+1})=e$. 
If $s$ is written by 
$s= \sum s_{i_1 \ldots i_k} e \otimes e_{i_1}\wedge \cdots\wedge e_{i_{k-1}}$
on $U$, then we take a section $Y$ by 
$Y = \sum s_{i_1 \ldots i_k} e_{2n+1}\wedge e_{i_1}\wedge \cdots\wedge e_{i_{k-1}}$. 

Now we consider the smooth section $F(Y)+G(s)$ of $L^{k}\otimes \bigwedge^{k} \mathbb{T}^*$. 
Proposition~\ref{s3p2} implies that $\{F(Y)+G(s)\}|_{D}$ restricted to $D$ is a holomorphic section of $L^k\otimes \bigwedge^k D^*$. 
Hence, there exists a holomorphic section $h$ of $\bigwedge^k D$ such that 
\[
\{F(Y)+G(s)\}|_{D}=F|_{D}(h)
\]
by the isomorphism $F|_{D}$ in Proposition~\ref{s3p1}. 
We define $X$ by $X=Y-h$. 
Then we obtain the holomorphic $k$-vector $X$ on $U$ satisfying the following equations
\begin{eqnarray*}
{\rm (i)}&&\gamma(X)=s, \\
{\rm (ii)}&&\{F(X)+G(s)\}|_{D} =0.
\end{eqnarray*}
We take such local sections $X_1$ and $X_2$ on open sets $U_1$ and $U_2$, respectively. 
The first condition $\rm (i)$ implies that the difference $X_1-X_2$ is in $\bigwedge^k D$ on $U_1\cap U_2$. 
We also have the equation $F|_{D}(X_1-X_2)=F(X_1-X_2)|_{D}=0$ by the second condition $\rm (ii)$. 
Then $X_1=X_2=0$ on $U_1\cap U_2$ since $F|_{D}$ is isomorphic. 
Hence the correspondence of $s$ to $X$ provides a right inverse of 
the map $\gamma : \mathcal{O}(\bigwedge^k \mathbb{T}) \to \mathcal{O}(L\otimes \bigwedge^{k-1} D)$ as a $\mathbb{C}$-module map. 
It induces the splitting of $\mathcal{O}(\bigwedge^k \mathbb{T})$ and the exactness of $H^i(\bigwedge^k \mathbb{T})$ for each $i$. 
It completes the proof. 
\end{proof}

\begin{rem}\label{s3r1}
{\rm 
In the proof, $h$ is independent of the connection $\nabla$ 
since $\{F(Y)+G(s)\}|_{D}$ and $F|_{D}$ do not depend on $\nabla$ by Proposition~\ref{s3p1} and~\ref{s3p2}. 
The $k$-vector $X=Y-h$ is also independent of $\nabla$. 
Hence, the splitting of the sequence in Theorem~\ref{s3t1} is independent of the choice of the connection. 
}
\end{rem}

\subsection{The splitting of the sheaves $\mathcal{O}(L^{m}\otimes\bigwedge^k\mathbb{T})$ as $\mathbb{C}$-module}
In this section, we generalize Theorem~\ref{s3t1} to the splitting of $\mathcal{O}(L^{m}\otimes \bigwedge^k \mathbb{T})$ 
under a condition for $m$ and $k$. 
Let $m$ be an integer such that $m\le -k-1$ or $m\ge -[\frac{k}{2}]$. 
We define a constant $c_{m,i}$ by $c_{m,0}=1$ and 
\[
c_{m,i}=\frac{1}{(k+m)(k+m-1)\cdots(k+m-i+1)\ i!}
\]
for each $i=1,\dots,[\frac{k}{2}]$. 
These constants are well-defined 
since $m+k<0$ and $m+k-[\frac{k}{2}]+1>0$ in the cases of $m\le -k-1$ and $m\ge -[\frac{k}{2}]$, respectively. 
We define a map $F_m : L^{m}\otimes \bigwedge^k \mathbb{T} \to L^{m+k}\otimes \bigwedge^k \mathbb{T}^*$ by 
\[
F_m(X)=\sum_{i=0}^{[\frac{k}{2}]}c_{m,i}\otimes^{k-2i} d^{\nabla}\gamma((d^{\nabla}\gamma)^i(X))\wedge (d^{\nabla}\gamma)^i
\]
for $X\in L^{m}\otimes \bigwedge^k \mathbb{T}$. 
By the same argument in Lemma~\ref{s3l1}, the restriction $F_m|_{D}$ induces the map from $\mathcal{O}(L^{m}\otimes \bigwedge^k D)$ to $\mathcal{O}(L^{m+k}\otimes \bigwedge^k D^*)$. 
 
\begin{prop}\label{s3p3} 
The map $F_m|_{D}:\mathcal{O}(L^{m}\otimes \bigwedge^k D) \to \mathcal{O}(L^{m+k}\otimes \bigwedge^k D^*)$ is isomorphic 
if $k$ and $m$ satisfy one of following three conditions 
\[
\left\{
\begin{array}{ll}
m \le -n-[\frac{k}{2}]-2, &\quad 1\le k\le 2n+1, \\
-n-1 \le m \le -k-1, &\quad 1\le k\le n, \\
m \ge -[\frac{k}{2}], &\quad 1\le k\le 2n+1. 
\end{array}
\right.
\] 
\end{prop}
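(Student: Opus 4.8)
The plan is to imitate the proof of Proposition~\ref{s3p1} and reduce everything to a pointwise statement governed by Proposition~\ref{s2p2}. Using $(d^{\nabla}\gamma|_D)^i=\otimes^{2i}d^{\nabla}\gamma|_D(w^i)$ together with the identity $\otimes^k d^{\nabla}\gamma|_D(Z\wedge w^i)=\otimes^{k-2i}d^{\nabla}\gamma|_D(Z)\wedge(d^{\nabla}\gamma|_D)^i$, I would first factor
\[
F_m|_D=\otimes^k d^{\nabla}\gamma|_D\circ f_m,\qquad f_m(X)=\sum_{i=0}^{[\frac{k}{2}]}c_{m,i}\,(d^{\nabla}\gamma|_D)^i(X)\wedge w^i,
\]
where $f_m:\mathcal{O}(L^m\otimes\bigwedge^k D)\to\mathcal{O}(L^m\otimes\bigwedge^k D)$, the $L$-degrees cancelling because $w$ carries $L^{-1}$ and $d^{\nabla}\gamma|_D$ carries $L$. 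Since $\otimes^k d^{\nabla}\gamma|_D:\mathcal{O}(L^m\otimes\bigwedge^k D)\to\mathcal{O}(L^{m+k}\otimes\bigwedge^k D^*)$ is an isomorphism by the discussion in Section~3.1, it suffices to show $f_m$ is an isomorphism. At a point $x$ the factor $L^m_x$ is a one-dimensional scalar, so $(f_m)_x=\mathrm{id}\otimes T$ with $T=\sum_{i=0}^{[\frac{k}{2}]}c_{m,i}\mathcal{L}^i\Lambda^i$ acting on $\bigwedge^k D_x$ with respect to the symplectic form $(d^{\nabla}\gamma|_D)_x$; thus $F_m|_D$ is an isomorphism iff $T$ is. By Proposition~\ref{s2p2} this holds iff
\[
S_r:=\sum_{s=0}^{r}c_{m,s}\,\frac{r!}{(r-s)!}\,\frac{(n-k+r+s)!}{(n-k+r)!}\neq0\qquad\text{for every }r=0,\dots,\Big[\tfrac{k}{2}\Big].
\]

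The heart of the proof, and the step I expect to be the main obstacle, is the closed-form evaluation of $S_r$. Writing $\frac{r!}{(r-s)!\,s!}=\binom{r}{s}$ and substituting $c_{m,s}=\big((k+m)(k+m-1)\cdots(k+m-s+1)\,s!\big)^{-1}$, the sum becomes a terminating Gaussian hypergeometric series of unit argument, namely ${}_2F_1\big(-r,\,n-k+r+1;\,-(k+m);\,1\big)$. Applying the Chu--Vandermonde identity (or, to keep the argument self-contained, verifying the identity by induction on $r$ via (\ref{s2eq2})), I expect the series to collapse to
\[
S_r=\frac{(m+n+2)(m+n+3)\cdots(m+n+r+1)}{(k+m)(k+m-1)\cdots(k+m-r+1)}.
\]

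Granting this, the conclusion is immediate. The denominator is nonzero exactly because the constants $c_{m,r}$ are well defined, so $S_r=0$ if and only if the numerator vanishes, that is $m+n+l=0$ for some $l\in\{2,\dots,r+1\}$. Therefore $S_r\neq0$ for all $r=0,\dots,[\frac{k}{2}]$ precisely when $m$ lies outside the finite resonant set $\{-n-2,-n-3,\dots,-n-[\frac{k}{2}]-1\}$. It then remains only to check that each listed range avoids this set: if $m\le-n-[\frac{k}{2}]-2$ then $m$ is strictly below it; if $-n-1\le m\le-k-1$ (with $1\le k\le n$ ensuring the range is nonempty and contained in the domain where $c_{m,i}$ is defined) then $m\ge-n-1>-n-2$ is strictly above it; and if $m\ge-[\frac{k}{2}]$ then, since $[\frac{k}{2}]\le n$ for $1\le k\le2n+1$, we get $m\ge-[\frac{k}{2}]>-n-2$, again above it. In all three cases $S_r\neq0$ for every $r$, so $T$, and hence $F_m|_D$, is an isomorphism.
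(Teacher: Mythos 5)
Your proposal is correct and takes essentially the same route as the paper: factoring $F_m|_D$ as $\otimes^k d^{\nabla}\gamma|_D\circ f_m$, reducing pointwise to Proposition~\ref{s2p2}, and evaluating $S_r$ in exactly the closed form $\frac{(m+n+2)\cdots(m+n+r+1)}{(k+m)\cdots(k+m-r+1)}$ that the paper states, then checking the three ranges avoid its zeros. The only difference is that the paper asserts this identity without derivation, while you supply one via Chu--Vandermonde; this is a useful filling-in of detail, not a different proof.
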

\begin{proof} 
The map $F_m|_{D}$ is the composition of $\otimes^k d^{\nabla}\gamma|_D$ and the transformation $f_m$ of $\mathcal{O}(L^{m}\otimes \bigwedge^k D)$ 
defined by $f_m(X)=\sum_{i=0}^{[\frac{k}{2}]}c_{m,i}(d^{\nabla}\gamma|_D)^i(X)\wedge w^i$ for $X\in \mathcal{O}(L^{m}\otimes \bigwedge^k D)$. 
At each point $x\in M$, the linear map $(f_m)_x:L^{m}_x\otimes \bigwedge^k D_x\to L^{m}_x\otimes \bigwedge^k D_x$ is written 
by $(f_m)_x=\sum_{i=0}^{ [\frac{k}{2}]} c_{m,i} \mathcal{L}^i\Lambda^i$. 
Proposition~\ref{s2p2} implies that $(f_m)_x$ is isomorphic if $\sum_{s=0}^{r} c_{m,s} \frac{r!}{(r-s)!}\frac{(n-k+r+s)!}{(n-k+r)!}$ is not zero for each $r$. 
If $k\ge 2$, then 
\[
\sum_{s=0}^{r} c_{m,s} \frac{r!}{(r-s)!}\frac{(n-k+r+s)!}{(n-k+r)!}=\frac{(m+n+r+1)\cdots(m+n+2)}{(m+k)\cdots(m+k-r+1)}
\]
for any $r=1,\dots,[\frac{k}{2}]$. 
It yields that $\sum_{s=0}^{r} c_{m,s} \frac{r!}{(r-s)!}\frac{(n-k+r+s)!}{(n-k+r)!}\neq 0$ for each $r$ 
if $m\le -n-[\frac{k}{2}]-2$ or $m \ge -n-1$. 
By the assumption $m\le -k-1$ or $m\ge -[\frac{k}{2}]$, the linear map $(f_m)_x$ is isomorphic 
if 
\[
\left\{
\begin{array}{ll}
m \le -n-[\frac{k}{2}]-2, &\quad \forall k, \\
-n-1 \le m \le -k-1, &\quad 1\le k\le n, \\
m \ge -[\frac{k}{2}], &\quad \forall k. 
\end{array}
\right.
\] 
Hence, we finish the proof. 
\end{proof}

We define a constant $c'_{m,i}$ by
\[
c'_{m,i}=\frac{1}{(k+m)(k+m-1)\cdots(k+m-i)\ i!}
\]
for each $i=0,1,\dots,[\frac{k-1}{2}]$. 
We remark that these constants are well-defined since $m+k<0$ and $m+k-[\frac{k-1}{2}]>0$ in the cases of $m\le -k-1$ and $m\ge -[\frac{k}{2}]$, respectively. 
We define a map $G_m:\Gamma(L^{m+1}\otimes\bigwedge^{k-1} D) \to \Gamma(L^{m+k}\otimes\bigwedge^k \mathbb{T}^*)$ by 
\[
G_m(s)=\sum_{i=0}^{[\frac{k-1}{2}]}c'_{m,i}d^{\nabla}\left( \otimes^{k-1-2i}d^{\nabla}\gamma((d^{\nabla}\gamma)^i(s))\right)\wedge (d^{\nabla}\gamma)^i 
\]
for $s\in \Gamma(L^{m+1}\otimes\bigwedge^{k-1}D)$. 
Similarly to Proposition~\ref{s3p2}, we have 
\begin{prop}\label{s3p4} 
An $L^{k+m}$-valued $k$-form $\{F_m(X)+G_m(\gamma(X))\}|_{D}$ restricted to $D$ is holomorphic for any holomorphic $L^m$-valued $k$-vector $X$. 
$\hfill\Box$
\end{prop}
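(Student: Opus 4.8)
The plan is to imitate the proof of Proposition~\ref{s3p2} verbatim, keeping track of how the twist by $L^m$ alters the numerical coefficients. I fix a local frame $e$ of $L$ and write a holomorphic $L^m$-valued $k$-vector as $X=e^m\otimes X_0$; then $\gamma(X)=e^{m+1}\otimes\gamma_0(X_0)$, so that in $G_m(\gamma(X))$ one takes $s=\gamma(X)$ with $s_0=\gamma_0(X_0)\in\bigwedge^{k-1}D$. The first step is to record the twisted analogues of the local formulas (\ref{s3eq6.5}) and (\ref{s3eq8}). Since every occurrence of $\otimes^{\bullet}d^{\nabla}\gamma$ and of $(d^{\nabla}\gamma)^{i}$ contracts only the $\mathbb{T}$-part of $X$, the extra factor $e^m$ is a spectator in all these algebraic contractions; consequently $F_m(X)|_D$ has exactly the same three-term shape as the right-hand side of (\ref{s3eq6.5}), with the overall $e^k$ replaced by $e^{m+k}$ and each $\frac{(k-i)!}{k!i!}$ replaced by $c_{m,i}$.

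The only genuinely new point occurs in $G_m$. There $d^{\nabla}$ is applied to $\otimes^{k-1-2i}d^{\nabla}\gamma((d^{\nabla}\gamma)^i(s))$, which in the twisted setting is an $L^{m+k-i}$-valued form rather than the $L^{k-i}$-valued form of the case $m=0$. Hence the piece coming from differentiating the line-bundle factor carries the connection form with weight $(m+k-i)$ instead of $(k-i)$, so the third term of the twisted version of (\ref{s3eq8}) acquires the coefficient $(m+k-i)\,c'_{m,i}$; the remaining two terms keep the coefficient $c'_{m,i}$ (the holomorphic $d$-term) and, after the same reindexing $i\mapsto i-1$ as in (\ref{s3eq8}), the coefficient $c'_{m,i-1}$.

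Setting $s=\gamma(X)$ and adding, the non-holomorphic terms — those containing the bare connection form $A$ — organise into two families, exactly as in Proposition~\ref{s3p2}: a family of the form $\otimes^{k-2i}d\gamma_0((d\gamma_0)^{i-1}(A(\gamma_0(X_0))))\wedge(d\gamma_0)^i$, contributed by $F_m$ with coefficient $-i\,c_{m,i}$ and by $G_m$ with coefficient $c'_{m,i-1}$, and a family of the form $A\wedge\otimes^{k-1-2i}d\gamma_0((d\gamma_0)^i(\gamma_0(X_0)))\wedge(d\gamma_0)^i$, contributed by $F_m$ with coefficient $-c_{m,i}$ and by $G_m$ with coefficient $(m+k-i)\,c'_{m,i}$. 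The whole expression is therefore holomorphic as soon as the two identities
\[
c'_{m,i-1}=i\,c_{m,i}, \qquad (m+k-i)\,c'_{m,i}=c_{m,i}
\]
hold, and both follow immediately by cancelling the obvious factor in the explicit product formulas defining $c_{m,i}$ and $c'_{m,i}$ (this is exactly why those constants were chosen). After the cancellation only the $\otimes d\gamma_0$ pieces and the exterior-derivative pieces survive, and these are holomorphic. The main thing to watch is the bookkeeping of these coefficients — in particular the appearance of the weight $(m+k-i)$ and the index shift relating $c'_{m,i-1}$ to $c_{m,i}$; the well-definedness hypothesis $m\le -k-1$ or $m\ge -[\frac{k}{2}]$ is precisely what guarantees that none of the denominators in $c_{m,i},\,c'_{m,i}$ vanish, so that the two matching identities make sense.
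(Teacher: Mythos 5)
Your proof is correct and follows exactly the route the paper intends: the paper omits this proof as ``similarly to Proposition~\ref{s3p2},'' and your computation is precisely that argument redone with the twisted coefficients. In particular, your two matching identities $c'_{m,i-1}=i\,c_{m,i}$ and $(m+k-i)\,c'_{m,i}=c_{m,i}$, together with the observation that the connection-form weight becomes $m+k-i$ because $\otimes^{k-1-2i}d^{\nabla}\gamma((d^{\nabla}\gamma)^i(s))$ is $L^{m+k-i}$-valued, are exactly the cancellations that make the statement hold and explain the choice of $c_{m,i}$ and $c'_{m,i}$.
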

By repeating the proof of Theorem~\ref{s3t1} with $F_m$ and $G_m$ instead of $F$ and $G$, then we obtain 
\begin{thm}\label{s3t2}
The sequence 
\[
0\to \mathcal{O}(L^{m}\otimes \bigwedge^k D) \to \mathcal{O}(L^{m}\otimes \bigwedge^k \mathbb{T}) \to \mathcal{O}(L^{m+1}\otimes \bigwedge^{k-1} D) \to 0
\] 
splits as sheaves of $\mathbb{C}$-module, and the sequence 
\[
0\to H^i(L^{m}\otimes \bigwedge^k D) \to H^i(L^{m}\otimes \bigwedge^k \mathbb{T}) \to H^i(L^{m+1}\otimes \bigwedge^{k-1} D) \to 0
\] 
is exact for each $i=0,\dots,2n+1$ 
if $k$ and $m$ satisfy one of following three conditions 
\[
\left\{
\begin{array}{ll}
m \le -n-[\frac{k}{2}]-2, &\quad 1\le k\le 2n+1, \\
-n-1 \le m \le -k-1, &\quad 1\le k\le n, \\
m \ge -[\frac{k}{2}], &\quad 1\le k\le 2n+1. \\
\end{array}
\right.
\] 
$\hfill\Box$
\end{thm}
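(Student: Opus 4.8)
The plan is to run the proof of Theorem~\ref{s3t1} essentially verbatim, substituting $F_m,G_m$ for $F,G$ and invoking Propositions~\ref{s3p3} and~\ref{s3p4} in place of Propositions~\ref{s3p1} and~\ref{s3p2}; the three arithmetic conditions on $m$ and $k$ are imposed exactly so that $F_m|_D$ is an isomorphism and the constants $c_{m,i},c'_{m,i}$ are well-defined.

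First I would produce a local holomorphic lift. Given a holomorphic section $s$ of $L^{m+1}\otimes\bigwedge^{k-1}D$ over an open set $U$ on which $L$ and $D$ are trivial, fix a frame $e$ of $L$, write $\gamma=e\otimes\gamma_0$, choose a frame $\{e_1,\dots,e_{2n}\}$ of $D$ together with a section $e_{2n+1}$ of $\mathbb{T}$ satisfying $\gamma(e_{2n+1})=e$, and lift coefficientwise: if $s=\sum s_{i_1\cdots i_{k-1}}\,e^{m+1}\otimes e_{i_1}\wedge\cdots\wedge e_{i_{k-1}}$, then set $Y=\sum s_{i_1\cdots i_{k-1}}\,e^{m}\otimes e_{2n+1}\wedge e_{i_1}\wedge\cdots\wedge e_{i_{k-1}}$. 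Since $\gamma$ annihilates $D$ and $\gamma(e_{2n+1})=e$, this $Y$ is a holomorphic $L^m$-valued $k$-vector with $\gamma(Y)=s$.

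Next I would correct $Y$ into the splitting. Consider the smooth section $F_m(Y)+G_m(s)$ of $L^{m+k}\otimes\bigwedge^k\mathbb{T}^*$. Proposition~\ref{s3p4} guarantees that its restriction to $D$ is holomorphic, and Proposition~\ref{s3p3} guarantees that $F_m|_D:\mathcal{O}(L^m\otimes\bigwedge^k D)\to\mathcal{O}(L^{m+k}\otimes\bigwedge^k D^*)$ is an isomorphism under each of the three conditions. Hence there is a unique holomorphic section $h$ of $L^m\otimes\bigwedge^k D$ with $\{F_m(Y)+G_m(s)\}|_D=F_m|_D(h)$, and setting $X=Y-h$ yields a holomorphic $L^m$-valued $k$-vector on $U$ satisfying the two conditions $\gamma(X)=s$ (because $h\in\bigwedge^k D$ lies in the kernel of $\gamma$) and $\{F_m(X)+G_m(s)\}|_D=0$.

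Finally I would check that $s\mapsto X$ is a well-defined global $\mathbb{C}$-linear right inverse of $\gamma$. If $X_1,X_2$ are constructed on $U_1,U_2$, then on $U_1\cap U_2$ the first condition forces $X_1-X_2\in L^m\otimes\bigwedge^k D$ and the second forces $F_m|_D(X_1-X_2)=0$, so injectivity of $F_m|_D$ gives $X_1=X_2$ and the local lifts patch to a global assignment. As the construction is $\mathbb{C}$-linear in $s$, it furnishes a $\mathbb{C}$-module splitting of $\gamma$, whence the sequence of sheaves splits over $\mathbb{C}$ and the associated long exact cohomology sequence breaks into the stated short exact sequences. The one genuine point is the simultaneous validity of the two propositions: Proposition~\ref{s3p4} holds whenever $F_m$ and $G_m$ are defined, so the real constraint is the isomorphism of Proposition~\ref{s3p3}, which is needed both for the existence of $h$ and for the uniqueness used in patching, and it is precisely this requirement that pins down the three arithmetic conditions on $m$ and $k$.
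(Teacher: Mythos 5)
Your proposal is correct and is exactly the paper's argument: the paper's proof of this theorem consists precisely of the instruction to repeat the proof of Theorem~\ref{s3t1} with $F_m$ and $G_m$ in place of $F$ and $G$, invoking Propositions~\ref{s3p3} and~\ref{s3p4} where the original used Propositions~\ref{s3p1} and~\ref{s3p2}, which is what you carry out (local coefficientwise lift $Y$, correction $X=Y-h$ via the isomorphism $F_m|_D$, and patching by uniqueness). Your closing observation that the three arithmetic conditions enter only through the well-definedness of $c_{m,i}, c'_{m,i}$ and the isomorphism of $F_m|_D$ matches the paper's structure as well.
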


\subsection{The splitting of the sheaves $\mathcal{O}(l^{m}\otimes\bigwedge^k\mathbb{T})$ on $\mathbb{CP}^{2n+1}$}
On the odd dimensional projective space $\mathbb{CP}^{2n+1}$, 
there exists a standard contact structure $\gamma$ written by 
$\gamma =z^0 dz^1 - z^1dz^0 + \cdots + z^{2n}dz^{2n+1} - z^{2n+1}dz^{2n}$ in the homogeneous coordinate. 
Let $l$ denote the hyperplane bundle $\mathcal{O}(1)$ on $\mathbb{CP}^{2n+1}$. 
The associated bundle $L$ is given by $l^2=\mathcal{O}(2)$ and the contact structure $\gamma$ is regarded as a section of $l^2\otimes \mathbb{T}^*$. 

We consider the short exact sequence
\begin{equation*}
0\to l^m \otimes \bigwedge^k D \to l^m \otimes \bigwedge^k \mathbb{T} \stackrel{\gamma}{\to} l^{m+2} \otimes \bigwedge^{k-1}D \to 0 
\end{equation*}
for $m\in \mathbb{Z}$ and $1\le k \le 2n+1$. 
If $m$ is even, then the splitting of $\mathcal{O}(l^{m}\otimes\bigwedge^k\mathbb{T})$ is induced by Theorem~\ref{s3t2} since $L=l^2$. 
From now on, we assume that $m$ is odd. 
We define a constant $\tilde{c}_{m,i}$ by $\tilde{c}_{m,0}=1$ and 
\[
\tilde{c}_{m,i}=\frac{1}{(k+\frac{m}{2})(k+\frac{m}{2}-1)\cdots(k+\frac{m}{2}-i+1)\ i!}
\]
for each $i=1,\dots,[\frac{k}{2}]$. 
These constants are well-defined since $k+\frac{m}{2}-i+1\neq 0$ for any $i$. 
We fix a connection $\nabla$ of $l$ and define a map $\widetilde{F}_m : l^{m}\otimes \bigwedge^k \mathbb{T} \to l^{m+2k}\otimes \bigwedge^k \mathbb{T}^*$ by 
\[
\widetilde{F}_m(X)=\sum_{i=0}^{[\frac{k}{2}]}\tilde{c}_{m,i}\otimes^{k-2i} d^{\nabla}\gamma((d^{\nabla}\gamma)^i(X))\wedge (d^{\nabla}\gamma)^i
\]
for $X\in l^{m}\otimes \bigwedge^k \mathbb{T}$. 
The restriction $\widetilde{F}_m|_{D}$ induces the map from $\mathcal{O}(l^{m}\otimes \bigwedge^k D)$ to $\mathcal{O}(l^{m+2k}\otimes \bigwedge^k D^*)$. 
\begin{prop}\label{s4.2p1} 
The map $\widetilde{F}_m|_{D}:\mathcal{O}(l^{m}\otimes \bigwedge^k D) \to \mathcal{O}(l^{m+2k}\otimes \bigwedge^k D^*)$ is isomorphic 
if $m$ satisfies $m \le -2n-2[\frac{k}{2}]-3$ or $-2n-3 \le m$. 
\end{prop}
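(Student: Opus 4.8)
The plan is to follow the proof of Proposition~\ref{s3p3} almost verbatim, exploiting that $\tilde{c}_{m,i}$ is obtained from $c_{m,i}$ by the substitution $m\mapsto\frac{m}{2}$. On $\mathbb{CP}^{2n+1}$ one has $L=l^2$, so $d^\nabla\gamma$ takes values in $l^2$ and the restriction $\otimes^k d^\nabla\gamma|_D:\mathcal{O}(l^m\otimes\bigwedge^k D)\to\mathcal{O}(l^{m+2k}\otimes\bigwedge^k D^*)$ is an isomorphism by the nondegeneracy of $d\gamma_0$ on $D$ used in Subsection 3.1. Letting $w$ be the section of $l^{-2}\otimes\bigwedge^2 D$ with $\otimes^2 d^\nabla\gamma|_D(w)=d^\nabla\gamma|_D$, so that $(d^\nabla\gamma|_D)^i=\otimes^{2i}d^\nabla\gamma|_D(w^i)$, I factor $\widetilde{F}_m|_D=\big(\otimes^k d^\nabla\gamma|_D\big)\circ\tilde{f}_m$, where $\tilde{f}_m(X)=\sum_{i=0}^{[\frac{k}{2}]}\tilde{c}_{m,i}(d^\nabla\gamma|_D)^i(X)\wedge w^i$ is a transformation of $\mathcal{O}(l^m\otimes\bigwedge^k D)$. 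It thus suffices to show that $\tilde{f}_m$ is an isomorphism.

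Passing to the fiber at each $x\in M$, with the symplectic form $(d^\nabla\gamma|_D)_x$ on $D_x$ and the operators $\mathcal{L},\Lambda$ of Section 2, the map becomes $(\tilde{f}_m)_x=\sum_{i=0}^{[\frac{k}{2}]}\tilde{c}_{m,i}\mathcal{L}^i\Lambda^i$. By Proposition~\ref{s2p2} this is an isomorphism precisely when
\[
\sigma_r:=\sum_{s=0}^{r}\tilde{c}_{m,s}\,\frac{r!}{(r-s)!}\,\frac{(n-k+r+s)!}{(n-k+r)!}\neq 0\qquad\text{for }r=0,\dots,\Big[\tfrac{k}{2}\Big].
\]
Because $\tilde{c}_{m,s}$ is $c_{m,s}$ with $m$ replaced by $\frac{m}{2}$, the telescoping identity proved in Proposition~\ref{s3p3} applies unchanged and gives $\sigma_0=1$ together with
\[
\sigma_r=\frac{\big(\tfrac{m}{2}+n+r+1\big)\big(\tfrac{m}{2}+n+r\big)\cdots\big(\tfrac{m}{2}+n+2\big)}{\big(\tfrac{m}{2}+k\big)\big(\tfrac{m}{2}+k-1\big)\cdots\big(\tfrac{m}{2}+k-r+1\big)}\qquad\big(1\le r\le\big[\tfrac{k}{2}\big]\big).
\]

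It remains to read off when $\sigma_r\neq 0$. The denominators are nonzero by the very well-definedness of $\tilde{c}_{m,i}$, so $\sigma_r$ vanishes only if its numerator does, i.e. if $\tfrac{m}{2}+n+j=0$ for some $j\in\{2,\dots,r+1\}$, equivalently $m\in\{-2n-4,-2n-6,\dots,-2n-2[\frac{k}{2}]-2\}$. The hypotheses $m\le -2n-2[\frac{k}{2}]-3$ and $-2n-3\le m$ are exactly the two ranges avoiding this finite set of even integers, so every $\sigma_r\neq 0$, and hence $(\tilde{f}_m)_x$, and with it $\tilde{f}_m$ and $\widetilde{F}_m|_D$, is an isomorphism. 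I expect the only non-routine point to be the collapse of the sum defining $\sigma_r$ to the displayed product; but this is precisely the identity already established in Proposition~\ref{s3p3} under $m\mapsto\frac{m}{2}$, so no fresh computation is required. Since $m$ is odd the excluded values are even and never attained, so in fact $\widetilde{F}_m|_D$ is an isomorphism for every odd $m$; the stated bounds are merely the parity-free conditions under which the argument is valid.
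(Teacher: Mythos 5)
Your proof is correct and follows the paper's own argument essentially verbatim: the factorization $\widetilde{F}_m|_{D}=(\otimes^k d^{\nabla}\gamma|_D)\circ \tilde{f}_m$, the fiberwise reduction to Proposition~\ref{s2p2}, and the collapse of $\sigma_r$ to the product $\frac{(\frac{m}{2}+n+r+1)\cdots(\frac{m}{2}+n+2)}{(\frac{m}{2}+k)\cdots(\frac{m}{2}+k-r+1)}$ obtained from Proposition~\ref{s3p3} under the substitution $m\mapsto \frac{m}{2}$ are exactly what the paper does. Your closing observation---that for odd $m$ the problematic values are all even, so $\widetilde{F}_m|_{D}$ is in fact an isomorphism for every odd $m$ and the stated bounds are merely sufficient---is a correct minor strengthening that the paper does not record, but it does not alter the argument.
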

\begin{proof} 
If $k\ge 2$, then  
\[
\sum_{s=0}^{r} \tilde{c}_{m,s} \frac{r!}{(r-s)!}\frac{(n-k+r+s)!}{(n-k+r)!}=\frac{(\frac{m}{2}+n+r+1)\cdots(\frac{m}{2}+n+2)}{(\frac{m}{2}+k)\cdots(\frac{m}{2}+k-r+1)}
\]
for any $r=1,\dots,[\frac{k}{2}]$. 
By the same argument in Proposition~\ref{s3p3}, the map $\widetilde{F}_m|_{D}$ is isomorphic 
if $m \le -2n-2[\frac{k}{2}]-3$ or $-2n-3 \le m$. 
\end{proof}

We define a constant $\tilde{c}'_{m,i}$ by
\[
\tilde{c}'_{m,i}=\frac{1}{(k+\frac{m}{2})(k+\frac{m}{2}-1)\cdots(k+\frac{m}{2}-i)\ i!}
\]
for each $i=0,1,\dots,[\frac{k-1}{2}]$. 
These constants are well-defined since $\frac{m}{2}+k-[\frac{k-1}{2}]\neq 0$. 
We define a map $\widetilde{G}_m:\Gamma(l^{m+1}\otimes\bigwedge^{k-1} D) \to \Gamma(l^{m+2k}\otimes\bigwedge^k \mathbb{T}^*)$ by 
\[
\widetilde{G}_m(s)=\sum_{i=0}^{[\frac{k-1}{2}]}\tilde{c}'_{m,i}d^{\nabla}\left( \otimes^{k-1-2i}d^{\nabla}\gamma((d^{\nabla}\gamma)^i(s))\right)\wedge (d^{\nabla}\gamma)^i 
\]
for $s\in \Gamma(l^{m+2}\otimes\bigwedge^{k-1}D)$. 
Then we have 
\begin{prop}\label{s4.2p2} 
An $l^{2k+m}$-valued $k$-form $\{\widetilde{F}_m(X)+\widetilde{G}_m(\gamma(X))\}|_{D}$ restricted to $D$ is holomorphic for any holomorphic $l^m$-valued $k$-vector $X$. 
\end{prop}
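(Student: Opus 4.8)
The plan is to repeat the proof of Proposition~\ref{s3p2} (equivalently the twisted computation behind Proposition~\ref{s3p4}), taking care that here $\nabla$ is a connection on $l$ while the contact form $\gamma$ is $l^2$-valued. Fix a local frame $e$ of $l$ with connection form $A$ and write $\gamma=e^2\otimes\gamma_0$ for a holomorphic $1$-form $\gamma_0$. Because the induced connection on $l^2$ has connection form $2A$, one has $d^{\nabla}\gamma=e^2\otimes(d\gamma_0+2A\wedge\gamma_0)$ and $(d^{\nabla}\gamma)^i=e^{2i}\otimes\{(d\gamma_0)^i+2i\,A\wedge\gamma_0\wedge(d\gamma_0)^{i-1}\}$. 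Hence every local identity of Sections~3.2--3.3 carries over with the single $L$-connection form replaced by $2A$ and with the relevant frame powers of $l$ doubled.

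Next I would substitute these into the derivations of (\ref{s3eq6.5}) and (\ref{s3eq8}). For a holomorphic $l^m$-valued $k$-vector $X=e^m\otimes X_0$ and $s=\gamma(X)$ (so $s=e^{m+2}\otimes s_0$ with $s_0=\gamma_0(X_0)\in\bigwedge^{k-1}D$), the resulting local expressions for $\widetilde{F}_m(X)|_D$ and $\widetilde{G}_m(\gamma(X))|_D$ differ from their $L$-counterparts only in that (a) each term containing the connection form $A$ acquires an overall factor $2$, coming from $d^{\nabla}\gamma=e^2\otimes(d\gamma_0+2A\wedge\gamma_0)$ and from $(d^{\nabla}\gamma)^i$, and (b) the frame-differentiation factor that was $k-i$ in (\ref{s3eq8}) becomes $m+2k-2i$, since $\otimes^{k-1-2i}d^{\nabla}\gamma((d^{\nabla}\gamma)^i(s))$ is now $l^{m+2k-2i}$-valued. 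Each expression thereby splits into a manifestly holomorphic part, built from $d\gamma_0$ and its exterior derivative alone, and two families of $A$-terms.

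The crux is then to verify that, once $s_0=\gamma_0(X_0)$, the two families of $A$-terms of $\widetilde{F}_m$ and $\widetilde{G}_m$ cancel. Matching the $A$-term that comes from $(d^{\nabla}\gamma)^i$ against the corresponding term of $\widetilde{G}_m$ requires $2i\,\tilde{c}_{m,i}=2\,\tilde{c}'_{m,i-1}$, while matching the $A$-term that comes from frame differentiation requires $2\,\tilde{c}_{m,i}=(m+2k-2i)\,\tilde{c}'_{m,i}$. Both hold directly from the definitions: since $\tilde{c}'_{m,i}=\tilde{c}_{m,i}/(k+\frac{m}{2}-i)$ one gets $2\tilde{c}_{m,i}=(2k+m-2i)\tilde{c}'_{m,i}$, and $i\,\tilde{c}_{m,i}=\tilde{c}'_{m,i-1}$ is immediate. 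Thus all non-holomorphic terms cancel, and $\{\widetilde{F}_m(X)+\widetilde{G}_m(\gamma(X))\}|_{D}$ reduces to the holomorphic part, which proves the claim.

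I expect the main obstacle to be precisely the bookkeeping in the previous paragraph: confirming that the factors of $2$ forced by $L=l^2$ are exactly absorbed by the half-integer shift $m\mapsto m/2$ built into $\tilde{c}_{m,i}$ and $\tilde{c}'_{m,i}$. This is the reason those constants are defined with $k+\frac{m}{2}$ rather than $k+m$; conceptually the computation is the $m\mapsto m/2$ specialization of Proposition~\ref{s3p4}, but because $\nabla$ lives on $l$ and not on $L$ this correspondence must be checked through the explicit local computation rather than asserted by formal substitution.
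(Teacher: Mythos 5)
Your proposal is correct and follows essentially the same route as the paper: fix a frame $\tilde{e}$ of $l$, write $\gamma=\tilde{e}^2\otimes\gamma_0$, and rerun the local computation of Proposition~\ref{s3p2} with connection form $2A$ and the shifted frame powers, so that the $A$-terms cancel and only the manifestly holomorphic $d\gamma_0$-expression survives. The paper compresses this into the phrase ``by the same argument,'' whereas you make explicit the two coefficient identities $i\,\tilde{c}_{m,i}=\tilde{c}'_{m,i-1}$ and $2\tilde{c}_{m,i}=(2k+m-2i)\,\tilde{c}'_{m,i}$ that the cancellation actually requires; both are stated and verified correctly.
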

\begin{proof} 
We fix a frame $\tilde{e}$ of $l$ on $U$. Then $\gamma$ is given by $\gamma=\tilde{e}^2\otimes \gamma_0$ for a holomorphic $1$-form $\gamma_0$ on $U$. 
Let $X$ be a holomorphic $l^m$-valued $k$-vector $X$ on $M$. 
By the same argument in the proof of Proposition~\ref{s3p2}, we have 
\begin{eqnarray*}
\{\widetilde{F}_m(X)+\widetilde{G}_m(s)\}|_{D}&=&\tilde{e}^{2k+m}\otimes \Big\{ \sum_{i=0}^{[\frac{k}{2}]}\tilde{c}_{m,i}\{ \otimes^{k-2i} d\gamma_0((d\gamma_0)^i(X))\wedge (d\gamma_0)^i \\
&&+\sum_{i=0}^{[\frac{k-1}{2}]}\tilde{c}'_{m,i} d (\otimes^{k-1-2i} d\gamma_0((d\gamma_0)^i(\gamma_0(X))))\wedge (d\gamma_0)^i \Big\} \Big|_D. 
\end{eqnarray*}
Hence $\{\widetilde{F}_m(X)+\widetilde{G}_m(\gamma(X))\}|_{D}$ is holomorphic. 
\end{proof} 
By repeating the proof of Theorem~\ref{s3t1} with $\widetilde{F}_m$ and $\widetilde{G}_m$ instead of $F$ and $G$, then we obtain 
\begin{thm}\label{s4.2t1} 
The sequence 
\[
0\to \mathcal{O}(l^{m}\otimes \bigwedge^k D) \to \mathcal{O}(l^{m}\otimes \bigwedge^k \mathbb{T}) \to \mathcal{O}(l^{m+2}\otimes \bigwedge^{k-1} D) \to 0
\] 
splits as sheaves of $\mathbb{C}$-module, and the sequence 
\[
0\to H^i(l^{m}\otimes \bigwedge^k D) \to H^i(l^{m}\otimes \bigwedge^k \mathbb{T}) \to H^i(l^{m+2}\otimes \bigwedge^{k-1} D) \to 0
\] 
is exact for each $i=0,\dots,2n+1$ if $k$ and $m$ satisfy one of following conditions 
\[
\left\{
\begin{array}{lll}
m \le -2n-2[\frac{k}{2}]-4, &\quad 1\le k\le 2n+1, &\quad m : \text{ even}, \\
-2n-2 \le m \le -2k-2, &\quad 1\le k\le n, &\quad m : \text{ even}, \\
m \ge -2[\frac{k}{2}], &\quad 1\le k\le 2n+1, &\quad m : \text{ even}, \\
m \le -2n-2[\frac{k}{2}]-3, &\quad 1\le k\le 2n+1,&\quad m : \text{ odd}, \\
m\ge -2n-3, &\quad 1\le k\le 2n+1,&\quad m : \text{ odd}. \\
\end{array}
\right.
\] 
$\hfill\Box$
\end{thm}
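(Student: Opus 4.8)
The plan is to separate the argument according to the parity of $m$, because the even case is literally an instance of the already-proved Theorem~\ref{s3t2}, whereas the odd case genuinely requires the auxiliary maps $\widetilde{F}_m$ and $\widetilde{G}_m$ introduced above, whose invertibility and holomorphy are furnished by Propositions~\ref{s4.2p1} and~\ref{s4.2p2}.

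First, for even $m$ I would write $m=2m'$. Since $L=l^2$ we have $l^m=L^{m'}$ and $l^{m+2}=L^{m'+1}$, so the displayed short exact sequence is exactly the sequence of Theorem~\ref{s3t2} for the exponent $m'$. Substituting $m=2m'$ into the three conditions of Theorem~\ref{s3t2} turns them into $m\le -2n-2[\frac{k}{2}]-4$ (all $k$), $-2n-2\le m\le -2k-2$ (for $1\le k\le n$), and $m\ge -2[\frac{k}{2}]$ (all $k$), which are precisely the three even-$m$ lines in the statement; the splitting and the exactness of the cohomology sequence then follow at once.

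Next, for odd $m$ I would reproduce the proof of Theorem~\ref{s3t1} verbatim, with $\widetilde{F}_m$, $\widetilde{G}_m$, and $l$ playing the roles of $F$, $G$, and $L$. Given a holomorphic section $s$ of $l^{m+2}\otimes\bigwedge^{k-1}D$, I would choose a trivializing open set $U$ with a frame $\tilde e$ of $l$, write $\gamma=\tilde e^2\otimes\gamma_0$, complete a frame $e_1,\dots,e_{2n}$ of $D$ by a local section $e_{2n+1}$ of $\mathbb{T}$ with $\gamma(e_{2n+1})=\tilde e^2$, and lift $s$ coefficientwise to a holomorphic $l^m$-valued $k$-vector $Y$ with $\gamma(Y)=s$ by wedging $e_{2n+1}$ into each term. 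Proposition~\ref{s4.2p2} then gives that $\{\widetilde{F}_m(Y)+\widetilde{G}_m(s)\}|_D$ is a holomorphic section of $l^{m+2k}\otimes\bigwedge^k D^*$, and under the two odd-$m$ conditions Proposition~\ref{s4.2p1} makes $\widetilde{F}_m|_D$ an isomorphism, so there is a unique holomorphic $h\in l^m\otimes\bigwedge^k D$ with $\widetilde{F}_m|_D(h)=\{\widetilde{F}_m(Y)+\widetilde{G}_m(s)\}|_D$. Setting $X=Y-h$ yields a local holomorphic lift satisfying $\gamma(X)=s$ together with $\{\widetilde{F}_m(X)+\widetilde{G}_m(s)\}|_D=0$.

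The gluing step will be identical to that in Theorem~\ref{s3t1}: if $X_1$ and $X_2$ are two such local lifts on overlapping opens, then $X_1-X_2$ lies in $l^m\otimes\bigwedge^k D$ by the first property and satisfies $\widetilde{F}_m|_D(X_1-X_2)=0$ by the second, so $X_1=X_2$ by the injectivity from Proposition~\ref{s4.2p1}. Hence the local lifts patch to a global $\mathbb{C}$-module right inverse of $\gamma$, which splits the sequence and gives the exact cohomology sequence. I expect no real obstacle, since all the analytic content has been isolated into Propositions~\ref{s4.2p1} and~\ref{s4.2p2}; the only point demanding care is the bookkeeping of powers of $l$ against $L=l^2$, namely checking that the odd-$m$ range of Proposition~\ref{s4.2p1} together with the rescaled even-$m$ conditions coming from Theorem~\ref{s3t2} reproduce exactly the five conditions listed in the theorem.
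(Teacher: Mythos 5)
Your proposal is correct and follows essentially the same route as the paper: the even-$m$ case is reduced to Theorem~\ref{s3t2} via $L=l^2$ (as the paper notes just before introducing $\widetilde{F}_m$), and the odd-$m$ case repeats the lifting-and-correction argument of Theorem~\ref{s3t1} with $\widetilde{F}_m$, $\widetilde{G}_m$ and Propositions~\ref{s4.2p1},~\ref{s4.2p2} supplying the isomorphism and holomorphy. Your bookkeeping of the five conditions (rescaling the three conditions of Theorem~\ref{s3t2} by $m=2m'$, and taking the two ranges of Proposition~\ref{s4.2p1} for odd $m$) matches the statement exactly.
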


\section{Vanishing theorems for cohomology of $\bigwedge^k D$}
In this section, we apply the splitting theorems to the cohomology of $\bigwedge^k D$ and obtain the vanishing theorems. 
From now on, we denote by $E \bigwedge^k D$ the tensor $E\otimes \bigwedge^k D$ of vector bundles $E$ and $\bigwedge^k D$ for simplicity. 

\subsection{Vanishing of the cohomology on compact K\"ahler complex contact manifolds}
We have the following vanishing theorem of the cohomology on compact K\"ahler manifolds : 
\begin{thm}\label{s4.1t1} 
If $M$ is a compact K\"ahler complex contact manifold with $c_1(M)>0$, then 
\begin{equation*}
H^i(M, L^m \bigwedge^{k} D)=\{0\} \quad 
\left\{
\begin{array}{lll}
i\le 2n-k,& m \le -[\frac{k+1}{2}]-n-1,& 1\le k\le 2n+1, \\
\forall i,& -n\le m \le -k-1,& 1\le k\le n-1, \\
i\ge k+1,& m \ge -[\frac{k}{2}],&  1\le k\le 2n+1. 
\end{array}
\right.
\end{equation*}
If $c_1(M)<0$, then 
\begin{equation*}
H^i(M, L^m \bigwedge^{k} D)=\{0\} \quad 
\left\{
\begin{array}{lll}
i\ge k+1,& m\le -n-[\frac{k}{2}]-2,&  1\le k\le 2n+1, \\
i\ge k+2,& m=-n-1,& 1\le k\le n, \\
i\le 2n-k-1,& m=-k,& 1\le k\le n, \\
i\le 2n-k,& m\ge -[\frac{k}{2}],&  1\le k\le 2n+1. 
\end{array}
\right.
\end{equation*}
\end{thm}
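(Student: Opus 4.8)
The plan is to feed the splitting cohomology sequence of Theorem~\ref{s3t2} into the Kodaira--Akizuki--Nakano (KAN) vanishing theorem, symmetrize the output by Serre duality, and use the Lefschetz decomposition of Proposition~\ref{s2p1} to reach the wide ranges. Two identifications fix the dictionary. Since $\bigwedge^{k}\mathbb{T}\cong\bigwedge^{2n+1-k}\mathbb{T}^{*}\otimes\bigwedge^{2n+1}\mathbb{T}$ and $\bigwedge^{2n+1}\mathbb{T}=K_{M}^{-1}=L^{n+1}$, the middle term of Theorem~\ref{s3t2} is
\[
H^{i}(L^{m}\otimes\textstyle\bigwedge^{k}\mathbb{T})\cong H^{i}(\textstyle\bigwedge^{2n+1-k}\mathbb{T}^{*}\otimes L^{m+n+1}),
\]
to which KAN applies directly. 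Moreover the isomorphism $\otimes^{k}d^{\nabla}\gamma|_{D}$ of Section~3.1 gives $\bigwedge^{k}D^{*}\cong L^{-k}\otimes\bigwedge^{k}D$, so Serre duality on the $(2n+1)$-fold $M$ becomes the self-pairing
\[
H^{i}(L^{m}\otimes\textstyle\bigwedge^{k}D)\cong H^{2n+1-i}(L^{-m-k-n-1}\otimes\textstyle\bigwedge^{k}D)^{*},
\]
an involution of $(m,i)$ for fixed $k$ that interchanges the half-lines $\{i\ge k+1\}$ and $\{i\le 2n-k\}$ and the thresholds $-[\tfrac{k}{2}]$ and $-[\tfrac{k+1}{2}]-n-1$.

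First I would run the \emph{sub} mechanism: wherever Theorem~\ref{s3t2} applies, $H^{i}(L^{m}\otimes\bigwedge^{k}D)$ injects into the middle term, so KAN--vanishing there gives the vanishing sought. If $c_{1}(M)>0$ then $L$ is positive, $L^{m+n+1}$ is positive exactly when $m\ge -n$, and KAN then kills $i\ge k+1$; this is the regime $m\ge -[\tfrac{k}{2}]$ covered by case (III) of Theorem~\ref{s3t2}, which yields the third line. The duality involution turns this into the first line, and the middle line (all $i$, on the band $-n\le m\le -k-1$, $k\le n-1$, where case (II) holds) follows by overlapping the direct half-line $i\ge k+1$ with its dual $i\le k$, the overlap being nonempty because $2n-k>k$ there. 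If $c_{1}(M)<0$ the sign of $L$ and every KAN inequality reverse, so the positive--twist estimate now lives on $m\le -n-[\tfrac{k}{2}]-2$ (case (I)) and gives $i\ge k+1$, the first line of the $c_{1}<0$ table; the second line comes from reading $\bigwedge^{k}D$ as the \emph{quotient} term of the sequence for the shifted index $(m-1,k+1)$, whose middle is $H^{i}(\bigwedge^{2n-k}\mathbb{T}^{*}\otimes L^{m+n})$, and the remaining two lines are the duals of these under the involution.

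The obstacle is that KAN on the middle term only ever produces the tails $i\ge k+1$ or $i\le k-1$, so the wide ranges $i\le 2n-k$ must be forced through the duality involution; near the boundary the validity region of Theorem~\ref{s3t2} (cut out exactly by the nonvanishing of $(m+n+2)\cdots(m+n+[\tfrac{k}{2}]+1)$) fails on the short interval $m\in\{-n-2,\dots,-n-[\tfrac{k}{2}]-1\}$, producing a parity split in which, for even $k$, the extreme twist $m=-[\tfrac{k}{2}]$ and its dual sit just outside the region. To close these endpoints I would abandon the splitting there and instead use the holomorphic Lefschetz decomposition
\[
L^{m}\otimes\textstyle\bigwedge^{k}D\;\cong\;\bigoplus_{r}L^{m+r}\otimes\textstyle\bigwedge^{k-2r}_{e}D
\]
furnished by Proposition~\ref{s2p1} with $w\in\mathcal{O}(L^{-1}\otimes\bigwedge^{2}D)$: by induction on $k$ the summands with $r\ge 1$ have strictly smaller form--degree and twist $m+r\ge -[\tfrac{k-2r}{2}]$, hence are covered by the inductive hypothesis, so the statement reduces to the vanishing of the cohomology of the single primitive summand $L^{m}\otimes\bigwedge^{k}_{e}D$. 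Establishing that primitive vanishing over the full range $i\le 2n-k$, and checking the isolated twists $m=-n-1$ (where $L^{m+n+1}=\mathcal{O}$, KAN is silent, and the Hodge numbers of $M$ obstruct naive vanishing) and $m=-k$ appearing in the $c_{1}<0$ table, is the part I expect to demand the most care.
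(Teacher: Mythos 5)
Your core mechanism is exactly the paper's: the cohomology exact sequence of Theorem~\ref{s3t2}, Kodaira--Akizuki--Nakano applied to the middle term (the paper phrases this via Serre duality as $H^i(L^m\bigwedge^k\mathbb{T})^*\cong H^{2n+1-i}(\Omega^k L^{-m-n-1})$, you via $\bigwedge^k\mathbb{T}\cong \Omega^{2n+1-k}\otimes L^{n+1}$ --- equivalent), and the self-pairing $H^i(L^m\bigwedge^kD)^*\cong H^{2n+1-i}(L^{-k-m-n-1}\bigwedge^kD)$ coming from $\bigwedge^kD^*\cong L^{-k}\bigwedge^kD$. Run as you describe, this yields the whole $c_1>0$ table (third line directly from condition (III), first line as its dual, the middle band being self-dual under the involution with $\{i\ge k+1\}\cup\{i\le 2n-k\}$ covering all $i$ when $k\le n-1$) and the first and fourth lines of the $c_1<0$ table, in the same way the paper does. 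Two small corrections: ``its dual $i\le k$'' should read $i\le 2n-k$; and your parity worry for $c_1>0$ is unfounded --- the even-$k$ endpoint $m=-[\frac{k}{2}]$ lies \emph{inside} condition (III), is killed there directly, and Serre duality transports that vanishing to the dual twist whether or not Theorem~\ref{s3t2} is valid at the dual twist, so no Lefschetz argument is needed in the positive case.

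The genuine gap is at the isolated twists of the $c_1<0$ table. Your route to the second line reads $H^i(L^{-n-1}\bigwedge^kD)$ as the quotient term of the sequence at the shifted index $(m-1,k+1)=(-n-2,k+1)$; but all three hypotheses of Theorem~\ref{s3t2} fail there ($-n-2\le -n-[\frac{k+1}{2}]-2$ forces $[\frac{k+1}{2}]\le 0$; $-n-1\le -n-2$ is false; $-n-2\ge -[\frac{k+1}{2}]$ forces $[\frac{k+1}{2}]\ge n+2$). Indeed $m=-n-2$ lies in the very band you identified where the factor $(m+n+2)\cdots$ vanishes, so the map $F_{m}|_D$ is genuinely not an isomorphism and the needed exactness is unavailable; on the sub-reading at $(-n-1,k)$ the sequence \emph{is} valid but KAN is silent since $L^{m+n+1}=\mathcal{O}$. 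Hence the second line is not established, and the third line (its Serre dual) falls with it. Your fallback --- the holomorphic Lefschetz decomposition $L^m\otimes\bigwedge^kD\cong\bigoplus_r L^{m+r}\otimes\bigwedge_e^{k-2r}D$ plus induction, reducing everything to a primitive summand --- is a sensible idea, but you leave precisely that primitive vanishing unproven, so the proposal as written does not close these two lines. For what it is worth, the paper's own treatment here is equally thin: it asserts that these lines ``follow from'' the exact sequence, yet the required instance lies outside the proven validity region in exactly the same way, so you have located a genuine soft spot of the paper rather than missed an idea contained in it; everything else in your proposal reproduces the paper's argument.
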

\begin{proof}
By Theorem~\ref{s3t2}, the sequence 
\begin{equation}
0\to H^i(M, L^m \bigwedge^k D) \to H^i(M, L^m \bigwedge^k \mathbb{T}) \to H^i(M, L^{m+1} \bigwedge^{k-1} D)\to 0 \label{s4.1eq1}
\end{equation}
is exact for each $i$ if $m$ and $k$ satisfy one of following three conditions 
\begin{equation}
\left\{
\begin{array}{ll}
m \le -n-[\frac{k}{2}]-2, &1\le k\le 2n+1, \\
-n-1 \le m \le -k-1, & 1\le k\le n, \ \\
m \ge -[\frac{k}{2}], &1\le k\le 2n+1.  
\end{array}
\right.\label{s4.1eq1.5}
\end{equation}
It follows from Serre's duality that 
\begin{equation}
H^i(M, L^m \bigwedge^k \mathbb{T})^* \cong H^{2n+1-i}(M, \Omega^k L^{-m}K_{M})\cong H^{2n+1-i}(M, \Omega^k L^{-m-n-1}). \label{s4.1eq2}
\end{equation}

In the case of $c_1(M)>0$, 
the first Chern class $c_1(L^{-m-n-1})$ of the line bundle $L^{-m-n-1}$ is negative 
if $m>-n-1$ since $c_1(L)=-\frac{1}{n+1}c_1(K_M)>0$. 
By applying the Kodaira-Akizuki-Nakano vanishing theorem~\cite{AN} to the last cohomology in (\ref{s4.1eq2}), we have 
$H^i(M, L^m \bigwedge^k \mathbb{T})=\{0\}$ for $k+1\le i$ if $m>-n-1$. 
Hence, the sequence (\ref{s4.1eq1}) implies 
\begin{eqnarray}
&&H^i(M, L^m \bigwedge^{k} D)=\{0\}, \label{s4.1eq3} \\ 
&&H^i(M, L^{m+1}\bigwedge^{k-1} D)=\{0\} \label{s4.1eq4}
\end{eqnarray}
for $k+1\le i\le 2n+1$, $m>-n-1$ and $1\le k\le 2n+1$. 
The condition (\ref{s4.1eq4}) is written by $H^i(M, L^{m'}\bigwedge^{k'} D)=\{0\}$ for $k'+2\le i\le 2n+1$, $m'>-n$ and $0\le k'\le 2n$. 
We only consider the vanishing for $k'\ge 1$ 
since the case $k'=0$ that $H^i(M, L^{m'})=\{0\}$ for $2\le i$ and $m'>-n$ is induced by the Kodaira-Akizuki-Nakano vanishing theorem. 
The first condition (\ref{s4.1eq3}) induces the second one (\ref{s4.1eq4}) for $k\ge 1$. 
Hence, (\ref{s4.1eq1.5}) and (\ref{s4.1eq3}) imply that $H^i(M, L^m \bigwedge^{k}D)=\{0\}$ if $m$ satisfies one of two conditions 
\begin{equation}
\left\{
\begin{array}{lll}
i\ge k+1,& -n\le m \le -k-1,& 1\le k \le n-1, \\
i\ge k+1,& m \ge -[\frac{k}{2}],& 1\le k\le 2n+1.  
\end{array}
\right.\label{s4.1eq5.1}
\end{equation}
The Serre's duality implies that 
\begin{equation}
H^i(M, L^m \bigwedge^{k} D)^* \cong H^{2n+1-i}(M, K_{M} L^{-m} \bigwedge^{k} D^*)\cong H^{2n+1-i}(M, L^{-k-m-n-1}\bigwedge^{k}D)\label{s4.1eq6}
\end{equation}
since $D^*=L^{-1}D$. 
We apply the vanishing in (\ref{s4.1eq5.1}) to the last cohomology in (\ref{s4.1eq6}), 
and obtain $H^i(M, L^m \bigwedge^{k}D)=\{0\}$ if $m$ satisfies one of two conditions 
\begin{equation}
\left\{
\begin{array}{lll}
i \le 2n-k,& m \le -[\frac{k+1}{2}]-n-1,&  1\le k\le 2n+1, \\ 
i \le 2n-k,& -n\le m \le -k-1,& 1\le k\le n-1. 
\end{array}
\right.\label{s4.1eq8.1}
\end{equation}
We remark that the condition $k\le -n-1$ implies that $2n-k\ge k+1$. 
It follows from (\ref{s4.1eq5.1}) and (\ref{s4.1eq8.1}) that $H^i(M, L^m \bigwedge^{k} D)=\{0\}$ for 
\begin{equation*}
\left\{
\begin{array}{lll}
i\le 2n-k,& m \le -[\frac{k+1}{2}]-n-1,& 1\le k\le 2n+1, \\
\forall i,& -n\le m \le -k-1,& 1\le k\le n-1, \\
i\ge k+1,& m \ge -[\frac{k}{2}],&  1\le k\le 2n+1. 
\end{array}
\right.
\end{equation*}

In the case of $c_1(M)<0$, the first Chern class $c_1(L^{-m-n-1})$ is negative if $m<-n-1$. 
Then the Kodaira-Akizuki-Nakano vanishing theorem implies 
$H^i(M, L^m \bigwedge^{k} \mathbb{T})=\{0\}$ for $k+1\le i$ if $m<-n-1$. 
It follows from (\ref{s4.1eq1}) that 
\begin{equation}
H^i(M, L^m \bigwedge^{k} D)=\{0\} \quad
\left\{
\begin{array}{lll}
i\ge k+1,& m \le -n-[\frac{k}{2}]-2,& 1\le k\le 2n+1, \\
i\ge k+2,& m=-n-1,& 1\le k\le n. 
\end{array}
\right.\label{s4.1eq13.5}
\end{equation}
Serre's duality implies 
\begin{equation}
H^i(M, L^m \bigwedge^{k} D)=\{0\} \quad
\left\{
\begin{array}{lll}
i\le 2n-k,& m \ge -[\frac{k}{2}],& 1\le k\le 2n+1, \\
i\le 2n-k-1,& m=-k,& 1\le k\le n.  
\end{array}
\right.\label{s4.1eq15}
\end{equation}
Hence it completes the proof. 
\end{proof}

\begin{rem}
{\rm 
Salamon proved that any $(p,q)$-cohomology $H^{p,q}(M)$ vanishes for $p\neq q$ 
if $M$ is the twistor space of a quaternion manifold with a positive scalar curvature~\cite{Sa}. 
In the proof, he also obtained the vanishing of the cohomology of $\bigwedge^k D$ on the twistor space (Equation (6.4) in \cite{Sa}). 
He used the notation of $L$ and $E$ as $L^{\frac{1}{2}}$ and $L^{-\frac{1}{2}}D$ in our notation, respectively. 
These results are improved to the case of compact K\"ahler complex contact manifolds with $c_1(M)>0$ by the same argument, 
and the vanishing is translated into 
\begin{equation}
H^i(M, L^m \bigwedge^{k} D)=\{0\} \quad 
\left\{
\begin{array}{lll}
\forall i,& -n\le m \le -k-1,& 1\le k\le n-1, \\
i\neq k,& m=-k,& 1\le k\le n. 
\end{array}
\right.\label{s4.1eq16}
\end{equation}
The first condition in (\ref{s4.1eq16}) is equal to the second condition in Theorem~\ref{s4.1t1}. 
The second one in (\ref{s4.1eq16}) is independent of our theorem. 
However, we remark that the first and third conditions in Theorem~\ref{s4.1t1} are not induced by the vanishing (\ref{s4.1eq16}). 
}
\end{rem}

\subsection{The vanishing of the cohomology on $\mathbb{CP}^{2n+1}$}
In this section, we show the vanishing theorem for 
$H^i(l^{m}\wedge^k D)=H^i(\mathcal{O}(m)\wedge^k D)$ on $\mathbb{CP}^{2n+1}$ by using Bott's vanishing formula~\cite{Bo}. 
We have the short exact sequence 
\begin{equation*}
0\to D \to \mathbb{T} \stackrel{\gamma}{\to} l^2 \to 0. \label{s4.2eq1}
\end{equation*}
It induces the exact sequence 
\begin{equation}
0\to l^{m} \bigwedge^k D \to l^{m} \bigwedge^k \mathbb{T}\stackrel{\gamma}{\to} l^{m+2} \bigwedge^{k-1} D \to 0 \label{s4.2eq2}
\end{equation}
for $m\in \mathbb{Z}$. 
Then we have a vanishing of the cohomology as follows 
\begin{thm}\label{s4.2t2} 
$H^i(l^{m} \bigwedge^k D)=\{0\}$ if, in the case $m$ is even 
\begin{equation*}
\left\{
\begin{array}{lll}
i\neq 2n+1,&  m\le -2n-2-2[\frac{k+1}{2}], & 1\le k\le 2n+1, \\
i\neq 2n+1-k,&  m= -2n-2, & 1\le k\le n, \\
\forall i,&  -2n\le m\le -2k-2, & 1\le k\le n, \\
i\neq k,&  m= -2k, & 1\le k\le n, \\
i\neq 0,&  m\ge -2[\frac{k}{2}], & 1\le k\le 2n+1 
\end{array}
\right. 
\end{equation*} 
and, in the case $m$ is odd 
\begin{equation*}
\left\{
\begin{array}{lll}
i\neq 2n+1,&  m\le -2n-3-k, & 1\le k\le 2n+1, \\
\forall i,& -2n-2-k\le m\le -2k+1, & 1\le k\le 2n+1, \\
\forall i,& -2n-3\le m\le -k, & 1\le k\le 2n+1, \\
i\neq 0,&  m\le -k+1, & 1\le k\le 2n+1. 
\end{array}
\right. 
\end{equation*} 
\end{thm}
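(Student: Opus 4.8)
The plan is to prove the vanishing results for $H^i(l^m \bigwedge^k D)$ on $\mathbb{CP}^{2n+1}$ by combining three ingredients: the splitting theorem (Theorem~\ref{s4.2t1}), Serre's duality on $\mathbb{CP}^{2n+1}$, and Bott's vanishing formula for $H^i(\mathbb{CP}^N, \Omega^p(j))$. The key structural observation is that Theorem~\ref{s4.2t1} converts the short exact sequence~(\ref{s4.2eq2}) into a \emph{split} short exact sequence of cohomology groups, so that
\[
0\to H^i(l^{m} \textstyle\bigwedge^k D) \to H^i(l^{m} \textstyle\bigwedge^k \mathbb{T}) \to H^i(l^{m+2} \textstyle\bigwedge^{k-1} D)\to 0
\]
is exact under the stated arithmetic conditions on $m$ and $k$. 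Thus vanishing of the middle term $H^i(l^m \bigwedge^k \mathbb{T})$ forces vanishing of \emph{both} the sub and quotient terms simultaneously, which is the engine that drives the whole argument.

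The main steps I would carry out, in order, are as follows. First I would express $H^i(l^m \bigwedge^k \mathbb{T})$ via Serre duality as $H^{2n+1-i}(\mathbb{CP}^{2n+1}, \Omega^k(-m-2n-2))$, using $K_{\mathbb{CP}^{2n+1}}=\mathcal{O}(-2n-2)=l^{-2n-2}$ together with $(\bigwedge^k\mathbb{T})^*\cong\Omega^k$. Then I would apply Bott's formula, which asserts $H^q(\mathbb{CP}^N,\Omega^p(j))=0$ unless $q=0$ with $j>p$, or $q=p$ with $j=0$, or $q=N$ with $j<p-N$. This pins down exactly the ranges of the twist $-m-2n-2$ (equivalently of $m$) for which the middle cohomology vanishes in each cohomological degree. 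Feeding these vanishing ranges into the split sequence yields vanishing of $H^i(l^m\bigwedge^k D)$ and of $H^i(l^{m+2}\bigwedge^{k-1}D)$, and by the reindexing $k\mapsto k-1,\ m\mapsto m+2$ the latter produces further vanishing statements for $\bigwedge^{k-1}D$. Finally I would invoke Serre duality a second time in the form $H^i(l^m\bigwedge^k D)^*\cong H^{2n+1-i}(l^{-m-2k-2n-2}\bigwedge^k D)$, which follows from $D^*\cong l^{-2}D$ on $\mathbb{CP}^{2n+1}$ (since $L=l^2$), to fold the ``$i$ large'' vanishing ranges into complementary ``$i$ small'' ranges. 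Assembling all four Bott cases (and separating the parities of $m$, since the odd case uses the odd conditions in Theorem~\ref{s4.2t1}) gives the tabulated conditions.

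\emph{The hard part} will be the bookkeeping that matches Bott's three exceptional cases against the three (or five, counting parity) admissibility conditions of Theorem~\ref{s4.2t1}, and then verifying that the resulting numerical bounds on $m$ close up consistently after the Serre-duality reflection $m\mapsto -m-2k-2n-2$, $i\mapsto 2n+1-i$. In particular one must check that the twist value landing in Bott's ``$q=p$, $j=0$'' middle case produces the precise isolated conditions such as $m=-2k$ with $i\neq k$, and that the degree shifts from the quotient term $l^{m+2}\bigwedge^{k-1}D$ align with the ranges $1\le k\le n$ versus $1\le k\le 2n+1$ required by the second line of the splitting conditions. I expect no conceptual obstacle beyond this case analysis: once the split sequence and the two duality isomorphisms are in hand, each entry in the two tables is obtained by intersecting a Bott vanishing range with a splitting-admissibility range, so the proof is essentially a careful enumeration rather than a new idea.
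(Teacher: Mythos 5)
Your proposal follows essentially the same route as the paper's own proof: the split cohomology sequence from Theorem~\ref{s4.2t1}, Serre duality $H^i(l^m\bigwedge^k\mathbb{T})^*\cong H^{2n+1-i}(\Omega^k(l^{-m-2n-2}))$ combined with Bott's formula to isolate the three exceptional cases, reindexing of the quotient term, and a second application of Serre duality via $D^*\cong l^{-2}D$ to reflect the ranges, all split by the parity of $m$. The ingredients, their order, and the duality isomorphisms match the paper exactly, so the remaining work is the same case-by-case enumeration the paper carries out.
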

\begin{proof}
By applying Theorem~\ref{s4.2t1} to the sequence (\ref{s4.2eq2}), 
we obtain the short exact sequence
\begin{equation}
0\to H^i(l^{m} \bigwedge^k D) \to H^i(l^{m} \bigwedge^k \mathbb{T}) \to H^i(l^{m+2} \bigwedge^{k-1} D)\to 0 \label{s4.2eq3}
\end{equation}
if 
\begin{equation}
\left\{
\begin{array}{lll}
m \le -2n-2[\tfrac{k}{2}]-4, & 1\le k\le 2n+1, &\quad m : \text{ even}, \\ 
-2n-2 \le m \le -2k-2, & 1\le k\le n, &\quad m : \text{ even}, \\ 
m \ge -2[\tfrac{k}{2}], & 1\le k\le 2n+1, &\quad m : \text{ even}, \\
m \le -2n-2[\tfrac{k}{2}]-3, & 1\le k\le 2n+1,&\quad m : \text{ odd}, \\ 
m \ge -2n-3, & 1\le k\le 2n+1,&\quad m : \text{ odd}. 
\end{array}
\right. \label{s4.2eq3.3}
\end{equation}
By applying Bott's vanishing formula to Serre's duality $H^i(l^{m} \bigwedge ^k \mathbb{T})^* \cong H^{2n+1-i}(\Omega^k (l^{-m-2n-2}))$, 
$H^i(l^{m} \bigwedge ^k \mathbb{T})=\{0\}$ holds except for the following cases 
\begin{equation}
\left\{
\begin{array}{ll}
i=2n+1-k, & m=-2n-2, \\
i=2n+1, & m< -2n-2-k, \\
i=0, & m> -k-1.
\end{array}
\right. \label{s4.2eq5}
\end{equation}
It follows from (\ref{s4.2eq3}) and (\ref{s4.2eq5}) 
that $H^i(l^{m} \bigwedge^k D)=H^i(l^{m+2} \bigwedge^{k-1} D)=\{0\}$
if 
\begin{equation}
\left\{
\begin{array}{ll}
i\neq 2n+1-k, & m=-2n-2, \\
i\neq 2n+1, & m\le -2n-3-k, \\
\forall i, & -2n-2-k \le m\le -k-1,\ (m\neq -2n-2), \\
i\neq 0, & m\ge -k.
\end{array}
\right. \label{s4.2eq6}
\end{equation}
In the case that $m$ is even, (\ref{s4.2eq3.3}) and (\ref{s4.2eq6}) imply that 
$H^i(l^{m} \bigwedge^k D)=\{0\}$ and $H^i(l^{m+2} \bigwedge^{k-1} D)=\{0\}$ if 
\begin{equation}
\left\{
\begin{array}{lll}
i\neq 2n+1-k,&  m= -2n-2, & 1\le k\le n, \\
i\neq 2n+1,&  m\le -2n-4-2[\frac{k}{2}], & 1\le k\le 2n+1, \\
\forall i,&  -2n-1\le m\le -2k-2, & 1\le k\le n, \\
i\neq 0,&  m\ge -2[\frac{k}{2}], & 1\le k\le 2n+1. 
\end{array}
\right. \label{s4.2eq9}
\end{equation} 
We replace the vanishing $H^i(l^{m+2} \bigwedge^{k-1} D)=\{0\}$ for (\ref{s4.2eq9}) by $H^i(l^{m} \bigwedge^k D)=\{0\}$ 
for 
\begin{equation}
\left\{
\begin{array}{lll}
i\neq 2n-k,&  m= -2n, & 0\le k\le n-1, \\
i\neq 2n+1,&  m\le -2n-2-2[\frac{k+1}{2}], & 0\le k\le 2n, \\
\forall i,&  -2n+1\le m\le -2k-2, & 0\le k\le n-1, \\
i\neq 0,&  m\ge -2[\frac{k+1}{2}]+2, & 0\le k\le 2n. 
\end{array}
\right. \label{s4.2eq10}
\end{equation} 
We remark that the case $k=0$ of (\ref{s4.2eq10}) is contained in Bott's vanishing formula. 
We only consider the vanishing of $H^i(l^{m} \bigwedge^k D)$ for $k\ge 1$. 
We summarize (\ref{s4.2eq9}) and (\ref{s4.2eq10}) as $H^i(l^{m} \bigwedge^k D)=\{0\}$ for even $m$ and 
\begin{equation}
\left\{
\begin{array}{lll}
i\neq 2n+1,&  m\le -2n-2-2[\frac{k+1}{2}], & 1\le k\le 2n+1, \\
i\neq 2n+1-k,&  m= -2n-2, & 1\le k\le n, \\
\forall i,&  -2n\le m\le -2k-2, & 1\le k\le n, \\
i\neq 0,&  m\ge -2[\frac{k}{2}], & 1\le k\le 2n+1. 
\end{array}
\right. \label{s4.2eq11}
\end{equation} 
In the case that $m$ is odd, by repeating the above argument the conditions (\ref{s4.2eq3.3}) and (\ref{s4.2eq6}) imply that 
$H^i(l^{m} \bigwedge^k D)=\{0\}$ for 
\begin{equation}
\left\{
\begin{array}{lll}
i\neq 2n+1,&  m\le -2n-3-k, & 1\le k\le 2n+1, \\
\forall i,&  m= -2n-2-k, & 1\le k\le 2n+1, \\
\forall i,&  m= -2n-1-k, & 1\le k\le 2n, \\
\forall i,&  -2n-3\le m\le -k-1, & 1\le k\le 2n+1, \\
\forall i,&  m=-k, & 1\le k\le 2n-1, \\
i\neq 0,&  m= -k, & k= 2n+1, \\
i\neq 0,&  m\ge -k+1, & 1\le k\le 2n+1. 
\end{array}
\right. \label{s4.2eq18}
\end{equation} 
Applying (\ref{s4.2eq11}) and (\ref{s4.2eq18}) to Serre's duality 
$H^i(l^{m} \bigwedge^k D)^*\cong H^{2n+1-i}(l^{-m-2n-2-2k} \bigwedge^k D)$, 
we obtain $H^i(l^{m} \bigwedge^k D)=\{0\}$ if, in the case $m$ is even 
\begin{equation}
\left\{
\begin{array}{lll}
i\neq 0,&  m\ge -2[\frac{k}{2}], & 1\le k\le 2n+1, \\
i\neq k,&  m= -2k, & 1\le k\le n, \\
\forall i,&  -2n\le m\le -2k-2, & 1\le k\le n, \\
i\neq 2n+1,&  m\le -2n-2-2[\frac{k+1}{2}], & 1\le k\le 2n+1 
\end{array}
\right. \label{s4.2eq20}
\end{equation}
and, in the case $m$ is odd 
\begin{equation}
\left\{
\begin{array}{lll}
i\neq 0,&  m\ge -k+1, & 1\le k\le 2n+1, \\
\forall i,&  m= -k, & 1\le k\le 2n+1, \\
\forall i,&  m= -k-1, & 1\le k\le 2n, \\
\forall i,&  -2n-1-k\le m\le -2k+1, & 1\le k\le 2n+1, \\
\forall i,&  m=-2n-2-k, & 1\le k\le 2n-1, \\
i\neq 2n+1,&  m= -2n-2-k, & k= 2n+1, \\
i\neq 2n+1,&  m\le -2n-3-k, & 1\le k\le 2n+1. 
\end{array}
\right. \label{s4.2eq21}
\end{equation}
The conditions (\ref{s4.2eq11}) and (\ref{s4.2eq20}) in the case of even $m$, 
and (\ref{s4.2eq18}) and (\ref{s4.2eq21}) in the case of odd $m$ induce the conditions in theorem. 
Hence it completes the proof. 
\end{proof}
\begin{rem}
{\rm 
A contact structure on $\mathbb{CP}^{2n+1}$ is unique, up to automorphisms~\cite{NT}. 
Hence Theorem~\ref{s4.2t1} and~\ref{s4.2t2} hold for any contact structure on $\mathbb{CP}^{2n+1}$. 
}
\end{rem}

\begin{rem}
{\rm 
In algebraic geometry, the {\it null correlation bundle} $N$ on $\mathbb{CP}^{2n+1}$ is defined by the short exact sequence 
\[
0\to N \to \mathbb{T}(-1)\to \mathcal{O}(1) \to 0
\]
where $\mathbb{T}(-1)=\mathcal{O}(-1)\otimes \mathbb{T}$ (\cite{OSS}). 
It induces to the following :
\[
0\to N(1)\to \mathbb{T} \to \mathcal{O}(2) \to 0.
\]
The bundle $N(1)=\mathcal{O}(1)\otimes N$ is the kernel of a transformation $A: \mathbb{T} \to \mathcal{O}(2)$ 
which is given by $a=\sum a_{ij}z_idz_j$ in the homogeneous coordinate.  
If $a_{ij} = -a_{ji}$ and $(a_{ij})$ is non-degenerate, then $A$ induces a contact structure on $\mathbb{CP}^{2n+1}$. 
By applying Theorem~\ref{s4.2t2} to $D= N(1)$, 
we obtain the vanishing formula for the cohomology $H^i(\wedge^k N(m+k))$. 
}
\end{rem}

\noindent
\textbf{Acknowledgements}. 
The authors would like to thank Professor S. Nayatani 
for his useful comments and advice. 
The first named author is supported by Grant-in-Aid for Young Scientists (B) $\sharp$17K14187 from JSPS.

\vspace{\baselineskip}
\begin{flushright}
\begin{tabular}{l}
\textsc{Takayuki Moriyama}\\
Department of Mathematics\\
Mie University\\
Mie 514-8507, Japan\\
E-mail: takayuki@edu.mie-u.ac.jp\\
\\

\textsc{Takashi Nitta}\\
Department of Mathematics\\
Mie University\\
Mie 514-8507, Japan\\
E-mail: nitta@edu.mie-u.ac.jp
\end{tabular}
\end{flushright}

\end{document}